\documentclass[11pt,reqno]{amsart}
    \usepackage{amsfonts}
    \usepackage{amsmath}
    \usepackage{amssymb}
    \usepackage{amscd}
    \usepackage{amstext}
    \usepackage{amsthm}
    \newtheorem{thm}{Theorem}[section]
    \newtheorem{Lem}[thm]{Lemma}
    \newtheorem{Prop}[thm]{Proposition}
    \newtheorem{Def}[thm]{Definition}
    \newtheorem{Cor}[thm]{Corollary}

        \newcommand{\A}{\mathcal A}

    \newcommand{\tsr}{\mathrm{tsr}\,}
    \newcommand{\csr}{\mathrm{csr}\,}
    \newcommand{\gsr}{\mathrm{gsr}\,}
    \newcommand{\Lg}{\mathrm{Lg}\,}
    \newcommand{\diag}{\mathrm{diag}\,}
    \newcommand{\rank}{\mathrm{rank}\,}
    
    \newcommand{\HO}{\mathrm{H}}
    \def\Ker{\mathrm{Ker}\,}
    \def\ran{\mathrm{Ran}\,}
    \def\det{\mathrm{det}\,}
    \def\tr{\mathrm{tr}\,}
    
    \def\dcup{\mathop{\displaystyle\bigcup}\limits}

    \textwidth 144mm
\textheight 244mm
\voffset -17mm
\hoffset -15mm
\begin{document}
\title{Approximate diagonalization of self--adjoint matrices over $C(M)$}
\author{Yifeng Xue}
\address{ Department of Mathematics, East China Normal University\newline
\hspace*{3mm} Shanghai 200241, P.R. China}
\email{yfxue@math.ecnu.edu.cn}

\begin{abstract}
Let $M$ be a compact Hausdorff space. We prove that in this paper, every self--adjoint matrix
over $C(M)$ is approximately diagonalizable iff $\dim M\le 2$ and $\HO^2(M,\mathbb Z)\cong 0$.
Using this result, we show that every unitary matrix over $C(M)$ is approximately diagonalizable
iff $\dim M\le 2$, $\HO^1(M,\mathbb Z)\cong\HO^2(M,\mathbb Z)\cong 0$ when
$M$ is a compact metric space.
\end{abstract}
\thanks{Project supported by Natural Science Foundation of China (no.10771069) and Shanghai
Leading Academic Discipline Project(no.B407)}
\subjclass{46L05}
\keywords{approximately diagonal, dimension of compact Huasdorff space, C\v ech cohomological
group, vector bundle}
\maketitle

\baselineskip 17pt
\section{Introduction}

For a unital $C^*$--algebra, we write $U (\A )$ (resp. $U_0(\A)$)
to denote the unitary group of $\A$ (resp. the connected component of the unit
in $U (\A )$). We also denote by ${\text M}_n(\A)$ the matrix algebra of $n\times n$ over $\A$.
Set $U_1(\A)=U(\A)$ (resp. $U_1^0(\A)=U_0(\A)$) and
$
U _n(\A )=U ({\text M}_n(\A )),\,U ^0_n(\A)=U _0({\text M}_n(\A )).
$

In \cite{K}, Kadison proved that for a Von Neunnam algebra $\A$, every normal matrix $A\in
M_n(\A)$ can be diagonalized, i.e., there are $U\in U_n(\A)$ and $d_1,\cdots,d_n\in\A$ such that $UAU^*=\diag(d_1,\cdots,d_n).$
He also showed that if $\A$ is only assumed to be a $C^*$--algebra, the result may fail. At same time, Grove and Pederson in
\cite{GP} considered the problem of diagonlization of normal matrices over $C(M)$, where $M$ is
a compact Hausdorff space. They characterized the $M$ that allows diagonalization of every
normal element in $M_n(C(M))$. Of course, these conditions in \cite{GP} are very complicated and
hard to be verified.

Recently, because of the study of the classification of $\mathrm{AI}$--algebras and $\mathrm{AT}$--algebras, the approximate
diagonalization of the self--adjoint matrix over $C([0,1])$ and some kind of unitary
matrix over $C(\mathbf{S^1})$ are given respectively (cf. \cite[Example 3.1.6]{Ro}). For more
general case, Choi and Elliott showed that if the dimension of the compact Hausdorff space $M$
is no more than two,  then every self--adjoint element in $M_n(C(M))$ can been approximated by a
self--adjoint element with $n$ distinct eigenvalues in \cite{CE}. Using this result, Thomsen
proved that if $\dim M\le 2$ and $\mathrm H^2(M,\mathbb Z)\cong 0$, then two self--adjoint
elements $a,\,b\in M_n(C(M))$ are approximately unitarily equivalent iff $a(x)$ and $b(x)$ have
same eigenvalues, $\forall\,x\in M$ (cf. \cite[Theorem 1.2, Corollary 1.3]{Th}).

In this paper, we first show that if every self--adjoint matrix over $C(M)$ is approximately
diaonalizable, then $\dim M\le 2$ and $\mathrm H^2(M,\mathbb Z)\cong 0$ and then we give a
constructed proof of Choi and Elliotts' result mentioned above.

\section{Preliminaries}
Throughout the paper, $\A$ is a $C^*$--algebra with unit $1$ and $M$ is a compact Haudorff space.

We view $\A^n$ as the set of all $n\times 1$ matrices over $\A$. Set
\begin{align*}
S_n(\A)&=\{(a_1,\cdots,a_n)^T\in\A^n\vert\,\sum\limits^n_{i=1}a_i^*a_i=1\}, \\
\Lg_n(\A)&=\{(a_1,\cdots,a_n)^T\in\A^n\vert\,\sum\limits^n_{i=1}b_ia_i=1,\
\text{for some}\ b_1,\cdots,b_n\in\A\}.
\end{align*}
According to \cite{R1} and \cite{R2}, the topological stable rank, the cnnected stable
rank and the general stable rank of $\A$ are defined respectively as follows:
\begin{align*}
\tsr(\A)=&\min\{\,n\in\mathbb N\vert\, \A^m\ \text{ is dense in}\ \Lg_m(\A),
                     \forall m\ge n\,\}\\
\csr(\A)=&\min\{\,n\in\mathbb N\vert\, U^0_m(\A)\ \text{ acts transitively on}\
        S_m(\A),\forall m\ge n\,\}\\
\gsr(\A)=&\min\{\,n\in\mathbb N\vert\, U_m(\A)\ \text{ acts transitively on}\
                  S_m(\A),\forall m\ge n\,\}.
\end{align*}
If no such integer exists, we set $\tsr(\A)=\infty$, $\csr(\A)=\infty$ and
$\gsr(\A)=\infty$ respectively. Those stable ranks of $C^*$--algebras are very
usful tools in computing $K$--groups of $C^*$--algebras (cf. \cite{R2}, \cite{X1}, \cite{X2} and \cite{X3} etc.). From \cite{R1} and \cite{N}, we have

\begin{Lem}\label{La}
Let $\A$ be a unital $C^*$--algebra and $M$ be a compact Hausdorff space. Then
\begin{enumerate}
\item [$(1)$] $\gsr(\A)\le\csr(\A)\le\tsr(\A)+1;$
\item [$(2)$] $\tsr(C(M))=\Big[\dfrac{\dim M}{2}\Big]+1,\
      \csr(C(M))\le\Big[\dfrac{\dim M+1}{2}\Big]+1;$
\end{enumerate}
\end{Lem}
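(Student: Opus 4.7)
The plan is to establish each inequality via the Rieffel--Nistor stable-rank machinery cited in the statement; since the lemma is a compilation of known results, I will only sketch the principal ideas.

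For part (1), the inequality $\gsr(\A)\le\csr(\A)$ is immediate from $U_m^0(\A)\subseteq U_m(\A)$: any action transitively realized by the smaller group is a fortiori transitive under the larger one. For $\csr(\A)\le\tsr(\A)+1$, fix $m\ge\tsr(\A)+1$ and $a,b\in S_m(\A)$. The density hypothesis built into the definition of $\tsr$ allows a small perturbation of $a$ that admits a left inverse $(c_1,\ldots,c_m)$; then a Whitehead-lemma style computation, using the extra coordinate guaranteed by $m\ge\tsr(\A)+1$, builds an explicit path of unitaries in $U_m^0(\A)$ carrying $a$ to the basis column $(1,0,\ldots,0)^T$. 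Applying the same procedure to $b$ and composing yields a connected unitary sending $a$ to $b$.

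For part (2), the equality $\tsr(C(M))=[\dim M/2]+1$ is Rieffel's theorem. The upper bound follows from the identification of $\Lg_m(C(M))$ with the space of continuous maps $F:M\to\mathbb C^m\setminus\{0\}$: since $\{0\}\subset\mathbb C^m$ has real codimension $2m$, whenever $m\ge[\dim M/2]+1$ we have $2m>\dim M$, and standard general-position arguments in covering-dimension theory produce arbitrarily small perturbations of any $F\in C(M,\mathbb C^m)$ whose image misses $\{0\}$. The matching lower bound is a cohomological obstruction: for $m=[\dim M/2]$ and $M$ in sufficient generality, one exhibits a map $F$ whose associated Euler / top Chern class in $\HO^{2m}(M,\mathbb Z)$ is nonzero, blocking approximation into $\Lg_m(C(M))$. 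Nistor's inequality $\csr(C(M))\le[(\dim M+1)/2]+1$ is proved in similar spirit: one shows that for $m$ in the stated range, the primary obstruction to connecting two elements of $S_m(C(M))$ through $U_m^0(C(M))$ lives in $\HO^{2m-1}(M,\mathbb Z)$, which vanishes because $2m-1>\dim M$ permits a cellular-approximation / standard homotopy extension argument.

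The main obstacle is the sharp lower bound for $\tsr$, which requires a genuine topological nontriviality on $M$; for general compact Hausdorff $M$ one reduces to a model case (a sphere or complex projective space of the appropriate dimension) via an embedding trick. The remaining ingredients are purely homotopy-theoretic once the dimension-theoretic general-position lemmas are in hand, and the whole package is imported from \cite{R1} and \cite{N}.
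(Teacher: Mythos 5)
The paper does not prove this lemma at all: it is stated as a quotation from Rieffel \cite{R1} and Nistor \cite{N}, so there is no ``paper's approach'' to compare against. Your outline of part (1) and of the upper bounds in part (2) is essentially the standard argument from those references: $\gsr\le\csr$ is trivial from $U_m^0\subseteq U_m$; $\csr\le\tsr+1$ is Rieffel's elementary-operations/Whitehead argument using the spare coordinate; the upper bound $\tsr(C(M))\le[\dim M/2]+1$ follows from the identification of $\Lg_m(C(M))$ with $C(M,\mathbb C^m\setminus\{0\})$ together with the Hurewicz--Wallman characterization of covering dimension; and Nistor's bound on $\csr(C(M))$ comes from $S_m(C(M))\cong C(M,\mathbf S^{2m-1})$ being connected (indeed having trivial homotopy set) when $2m-1>\dim M$, plus path-lifting along $U(m)\to\mathbf S^{2m-1}$.

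There is, however, a genuine gap in your lower-bound argument for $\tsr(C(M))\ge[\dim M/2]+1$. You propose to exhibit, for $2m\le\dim M$, a map $F$ with nonzero Euler/top Chern class in $\HO^{2m}(M,\mathbb Z)$, reducing to a sphere or $\mathbb{CP}^m$ ``via an embedding trick.'' This fails already for $M=[0,1]^{2}$: there $\tsr(C(M))=2$, yet $\HO^{2}(M,\mathbb Z)=0$, so no absolute cohomology class can carry the obstruction; and a $2m$--dimensional compactum need not contain any embedded cell or sphere (e.g.\ Bing's hereditarily indecomposable continua contain no arc), so the embedding reduction is unavailable. The correct tool, and the one Rieffel uses, is again the Hurewicz--Wallman characterization read in the other direction: $\dim M\ge 2m$ means there is an $f\in C(M,\mathbb R^{2m})$ with an \emph{unstable value} --- equivalently an essential map of $M$ \emph{onto} the cube $I^{2m}$ --- i.e.\ an $f$ that cannot be uniformly approximated by maps missing $0$; under the identification above this is precisely an element of $C(M)^{m}$ not approximable from $\Lg_m(C(M))$. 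The obstruction is thus relative (it lives in $\HO^{2m}(M,A;\mathbb Z)$ for a suitable closed pair), not absolute. With that replacement your sketch becomes the standard proof; as written, the lower-bound step would not survive even the simplest test case.
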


Using topological stable rank and general stable rank, we can deduce a key lemma of this
paper as follows:
\begin{Lem}\label{Lb}
Suppose that $\tsr(\A)\le 2$ and $\gsr(\A)\le 2$. Let $A=(a_{ij})_{n\times n}\ (n\ge 3)$ be
self--adjoint element in $\text{M}_n(\A)$. Then for any $ßepsilon>0$, there are $U\in U_n(\A)$
and $b_1,\cdots,b_{n-1}\in\A$ with $b_1,\cdots,b_{n-2}>0$ such that
$$
\Bigg\|U^*AU-\begin{bmatrix} a_{11}& b_1\\ b_1&a_{22}&\ddots\\ \ &\ddots &\ddots&b_{n-1}^*\\
 \ &\ &b_{n-1}&a_{nn}\end{bmatrix}\Bigg\|<(n-1)^{3/2}\epsilon.
$$
\end{Lem}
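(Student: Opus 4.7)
The plan is to carry out a Householder--style tridiagonalization over $\A$. At each stage one builds a unitary that sends the currently leading column (below the diagonal) approximately to a positive multiple of $e_1$, and then one recurses on the resulting lower--right self--adjoint block. After $n-2$ such reductions, the $2\times 2$ tail block remains untouched, which matches the statement that only $b_1,\dots,b_{n-2}$ are required to be positive.

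For a single step, write $A=\begin{pmatrix}a_{11}&\alpha^*\\ \alpha&A_1\end{pmatrix}$ with $\alpha=(a_{21},\dots,a_{n1})^T\in\A^{n-1}$ and $A_1\in\text{M}_{n-1}(\A)$ self--adjoint. Since $n-1\ge 2$ and $\tsr(\A)\le 2$, $\Lg_{n-1}(\A)$ is dense in $\A^{n-1}$, so one picks $\alpha'\in\Lg_{n-1}(\A)$ arbitrarily close to $\alpha$. Writing $b\alpha'=1$ for some row $b\in\A^{n-1}$ and using $1=(\alpha')^*b^*b\,\alpha'\le\|b\|^2(\alpha')^*\alpha'$ in the order on $\A$, one obtains that $c:=(\alpha')^*\alpha'$ is positive and invertible, with $c\ge\|b\|^{-2}$. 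Then $u:=\alpha' c^{-1/2}$ satisfies $u^*u=1$, i.e.\ $u\in S_{n-1}(\A)$; since $\gsr(\A)\le 2$ and $n-1\ge 2$, the group $U_{n-1}(\A)$ acts transitively on $S_{n-1}(\A)$, so there exists $V_1\in U_{n-1}(\A)$ with $V_1 e_1=u$. Setting $b_1:=c^{1/2}>0$ and $U_1=\diag(1,V_1)\in U_n(\A)$, one has
$$
U_1^*AU_1=\begin{pmatrix}a_{11}&(V_1^*\alpha)^*\\ V_1^*\alpha&V_1^*A_1V_1\end{pmatrix},
$$
with $\|V_1^*\alpha-b_1 e_1\|=\|V_1^*(\alpha-\alpha')\|=\|\alpha-\alpha'\|$. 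The lower--right block is again self--adjoint in $\text{M}_{n-1}(\A)$, so the same recipe applies.

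Iterating for $k=1,\dots,n-2$ (the condition $n-k\ge 2$ is needed at each step and is sharp at $k=n-2$) produces unitaries $V_k\in U_{n-k}(\A)$ and positive $b_k\in\A$; the total conjugating unitary is $U=\prod_{k=1}^{n-2}\diag(I_{k-1},V_k)$. A decisive point is that the conjugation by $\diag(I_{k-1},V_k)$ fixes the first $k-1$ rows and columns, so the errors committed at earlier stages are \emph{not} amplified by later unitaries. Hence the discrepancy between $U^*AU$ and the target tridiagonal matrix has nonzero entries only in the $n-2$ pairs of subdiagonal ``tails'', each of Hilbert $\A$--module norm at most the tolerance chosen at the corresponding step. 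Allocating tolerance of order $\epsilon$ per step and converting the Hilbert module norms on the various $\A^{n-k}$ into the operator norm on $\text{M}_n(\A)$ (which costs a factor of order $\sqrt{n-1}$ per conversion) yields the claimed bound $(n-1)^{3/2}\epsilon$.

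The one place where something non--formal is used is the identification $\Lg_m(\A)=\{v\in\A^m:v^*v\in\A^{-1}_+\}$: this is what converts the density statement supplied by $\tsr(\A)\le 2$ into invertibility of $c$, and hence into the positivity of each $b_k=c^{1/2}$. Once this is in hand, the Householder step is mechanical and the error accounting is book--keeping.
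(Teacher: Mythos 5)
Your proposal is correct and follows essentially the same route as the paper: approximate the subdiagonal column by a left--invertible one using $\tsr(\A)\le 2$, normalize it into $S_{n-1}(\A)$, rotate it onto $b_1e_1$ with a unitary supplied by $\gsr(\A)\le 2$, and recurse on the lower--right self--adjoint block with the same per--step error of order $\sqrt{n-1}\,\epsilon$. Your observation that $b\alpha'=1$ forces $(\alpha')^*\alpha'\ge\|b\|^{-2}$ is a welcome justification of the invertibility of $(\alpha')^*\alpha'$, which the paper merely asserts.
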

\begin{proof}Since $\tsr(\A)\le 2$, there exists $(a_{21}^{(2)},\cdots,a_{n1}^{(2)})^T\in
\Lg_{n-1}(\A)$ such that $\|a_{i1}-a_{i1}^{(2)}\|<\epsilon$, $i=2,\cdots,n$. Since $b_1=
\Big[\sum\limits^n_{i=2}\big(a_{i1}^{(2)}\big)^*a_{i1}^{(2)}\Big]^{1/2}$ is invertible,
$(a_{21}^{(2)}b_1^{-1},\cdots,a_{n1}^{(2)}b_1^{-1})^T\in S_{n-1}(\A)$. Thus, from $\gsr(\A)
\le 2$, we get $U_1\in U_{n-1}(\A)$ such that
$$
U_1(a_{21}^{(2)}b_1^{-1},\cdots,a_{n1}^{(2)}b_1^{-1})^T=(1,0,\cdots,0)^T\ \text{or}\
U_1(a_{21}^{(2)},\cdots,a_{n1}^{(2)})^T=(b_1,0,\cdots,0)^T.
$$
Set $U^{(1)}=\diag(1,U_1)$. Then using $a_{ij}=a_{ji}^*$, $i,\,j=1,\cdots$, we have
$$
U^{(1)}\begin{bmatrix}
  a_{11}&\big(a_{i1}^{(2)}\big)^*&\cdots&\big(a_{n1}^{(2)}\big)^*\\
  a_{21}^{(2)}&a_{22}&\cdots& a_{2n}\\
  \vdots&\vdots&\ddots&\vdots\\
  a_{i1}^{(2)}&a_{n2}&\cdots& a_{nn}\end{bmatrix}
  \Big(U^{(1)}\Big)^*=\begin{bmatrix}a_{11}&b_{(1)}^T\\ b_{(1)}&A_1\end{bmatrix},
$$
where $b_{(1)}=(b_1,0,\cdots,0)^T\in\A^{n-1}$, $A_1=A^*_1\in\text{M}_{n-1}{\A}$. Simple
computation shows that
$$
\Big\|U^{(1)}A (U^{(1)})^*-\begin{bmatrix}a_{11}&b_{(1)}^T\\ b_{(1)}&A_1\end{bmatrix}\Big\|<
\sqrt{n-1}\,\epsilon.
$$
Using the same way as above to $A_1$, after $n-2$ times, we can find a $U\in U_n(\A)$ and
invertible positive elements $b_1,\cdots,b_{n-2}\in\A$ and an element $b_{n-1}\in\A$
such that
$$
\Bigg\|U^*AU-\begin{bmatrix} a_{11}& b_1\\ b_1&a_{22}&\ddots\\ \ &\ddots &\ddots&b_{n-1}^*\\
 \ &\ &b_{n-1}&a_{nn}\end{bmatrix}\Bigg\|<(\sqrt{2}+\cdots+\sqrt{n-1})
<(n-1)^{3/2}\epsilon.
$$
\end{proof}

The following lemma concers the extension of continuous map.
\begin{Lem}\label{Lc}
Let $M_0$ be a closed subset of $M$ and $f_0\colon M_0\rightarrow U( \A)$ be a continuous map.
Then there are an open subset $O$ in $M$ containing $M_0$ and a continuous map $f\colon O
\rightarrow U(\A)$ such that $f\big\vert_{M_0}=f_0$.
\end{Lem}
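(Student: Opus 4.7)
The plan is to reduce the problem to an extension theorem for $\A$-valued (rather than $U(\A)$-valued) continuous functions, and then to push the resulting extension back into the unitary group by polar decomposition. Concretely, I first extend $f_0 \colon M_0 \to U(\A) \subset \A$ to a continuous $\A$-valued map $g \colon M \to \A$, forgetting momentarily the requirement that values lie in $U(\A)$. This step is justified by a Banach-space-valued version of the Tietze extension theorem: $M$ is compact Hausdorff, hence paracompact, and $\A$ is a Banach space, so such an extension exists (for instance via Dugundji's theorem once one notes that the compact image $f_0(M_0)$ sits inside a separable closed subspace of $\A$, or by a direct partition-of-unity construction using paracompactness of $M$).

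Second, I retract $g$ back into the unitary group. Let $\mathrm{GL}(\A)$ denote the set of invertible elements of $\A$, which is open in $\A$, and set $O = g^{-1}(\mathrm{GL}(\A))$. Since $g(M_0) = f_0(M_0) \subset U(\A) \subset \mathrm{GL}(\A)$, the set $O$ is an open neighborhood of $M_0$ in $M$. On $O$ the element $g(x)^* g(x)$ is positive and invertible, so continuous functional calculus produces a continuous $(g(x)^*g(x))^{-1/2}$, and I define
\[
  f(x) \;=\; g(x)\bigl(g(x)^*g(x)\bigr)^{-1/2}, \qquad x \in O.
\]
A direct calculation gives $f(x)^*f(x) = 1$ and, using invertibility of $g(x)$ to write $(g(x)^*g(x))^{-1} = g(x)^{-1}(g(x)^*)^{-1}$, also $f(x)f(x)^* = 1$. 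Hence $f \colon O \to U(\A)$ is continuous. Finally, for $x \in M_0$ the value $g(x) = f_0(x)$ is already unitary, so $g(x)^*g(x) = 1$ and $f(x) = g(x) = f_0(x)$; this yields $f|_{M_0} = f_0$ as required.

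The main obstacle I anticipate is the initial Banach-valued extension: the scalar Tietze theorem does not immediately produce $\A$-valued extensions for an arbitrary unital $C^*$-algebra $\A$, so one must either invoke a stronger extension principle (Dugundji, or a Michael-selection-type argument) and exploit the separability of the compact image $f_0(M_0)$, or build the extension by hand using a partition of unity subordinate to a cover of $M$ on which $f_0$ oscillates by a small amount. Everything after that first step is a routine polar-decomposition computation inside the $C^*$-algebra $\A$.
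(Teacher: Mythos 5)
Your proposal is correct and follows essentially the same route as the paper: extend $f_0$ to an $\A$--valued map $g$ on all of $M$ by a Tietze/Dugundji--type extension theorem, then retract to $U(\A)$ via the polar formula $f=g(g^*g)^{-1/2}$ on an open neighborhood of $M_0$ where $g$ is invertible. The only (harmless) difference is that you take $O=g^{-1}(\mathrm{GL}(\A))$ directly, whereas the paper constructs $O$ as a finite union of small balls around points of $M_0$ by compactness, on which $g$ is within $1/2$ of a unitary and hence invertible.
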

\begin{proof}By \cite[P360]{D}, there is a continuous map $g\colon M\rightarrow\A$ such that
$g\big\vert_{M_0}=f_0$. Thus for any $\epsilon\in(0,\frac{1}{2})$ and any $x_0\in M_0$, there
is an open subset $V(x_0)$ in $M$ containing $x_0$ such that $\|g(x)-g(x_0)\|<\epsilon$
whenever $x\in V(x_0)$. Since $M_0=\dcup_{x_0\in M_0}(V(x_0)\cap M_0)$ and $M_0$ is compact,
it follows that there are $x_1,\cdots,x_n\in M_0$ such that $M_0=\dcup_{i=1}^n(V(x_i)\cap M_0)$.
Set $O= \dcup_{i=1}^nV(x_i)$. Then $O$ is open and contains $M_0$. Furthermore, for any $x\in M$, there is $x_i$ such that
\begin{equation}\label{eaa}
\|g(x)-g(x_i)\|=\|g(x)-f_0(x_i)\|<\epsilon<\dfrac{1}{\,2\,}.
\end{equation}
(\ref{eaa}) implies that $(f_0(x_i))^*g(x)$ is invertible in $\A$ and so is the $g(x)$ and
moreover,
$$
\|(g(x))^{-1}\|=\|[f_0(x_i))^*g(x)]^{-1}\|<2,\ \forall\,x\in O.
$$
Now set $f(x)=g(x)[(g(x))^*g(x)]^{-1/2}$, $x\in O$. Then $f\colon O
\rightarrow U(\A)$ is continuous and $f\big\vert_{M_0}=f_0$ (for $g\big\vert_{M_0}=f_0$).
\end{proof}

The following results are well--known in Matrix Thoery, which come from \cite{W}.
\begin{Lem}\label{Ld}
Let $A$ be a self--adjoint matrix in $\text{M}_n(\mathbb C)$.
\begin{enumerate}
\item[$(1)$]If $A$ is tri--diagonal such that the elements in subdiagonal line are nonzero,
then $A$ has $n$ distinct eigenvalues$;$
\item[$(2)$] Let $\lambda_1\ge\cdots\ge\lambda_n$ be the eigenvalues of $A$, ordered
non--increasingly and each eigenvalue repeated according to its multiplicity. For every
nonzero subspace $V\subset\mathbb C^n$, set
$\lambda_A(V)=\min\{(Ax,x)\vert\,x\in V,\ \|x\|=1\}.$ Then
$$\lambda_j=\max\{\lambda_A(V)\vert\,V\subset\mathbb C^n,\,\dim V=j\},\ j=1,\cdots,n.$$
\end{enumerate}
\end{Lem}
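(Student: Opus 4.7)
Both statements are textbook results in Hermitian matrix theory (cited from \cite{W}), so the plan is to recall the standard arguments.

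For (1), the plan is to prove that every eigenvalue of $A$ has geometric multiplicity exactly one, which for a self--adjoint matrix forces $n$ distinct eigenvalues. Denoting the diagonal entries by $d_i$ and the subdiagonal entries by $c_i\neq 0$ (so the superdiagonal entries are $\bar c_i\neq 0$ since $A$ is self--adjoint), I would analyze the linear system $(A-\lambda I)x=0$ row by row. The first row gives $x_2$ in terms of $x_1$ via $\bar c_1 x_2=(\lambda-d_1)x_1$; each subsequent row then expresses $x_{i+1}$ uniquely in terms of $x_{i-1}$ and $x_i$, using $\bar c_i\neq 0$. Hence $\ker(A-\lambda I)$ is at most one--dimensional. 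Because $A$ is self--adjoint and therefore unitarily diagonalizable, geometric and algebraic multiplicities coincide, so all $n$ eigenvalues must be distinct.

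For (2), I plan to use the Courant--Fischer max--min argument with the spectral decomposition. Let $v_1,\dots,v_n$ be an orthonormal basis of eigenvectors with $Av_i=\lambda_i v_i$. To establish $\max\lambda_A(V)\ge\lambda_j$, I take $V_0=\mathrm{span}(v_1,\dots,v_j)$: any unit $x=\sum_{i=1}^j\alpha_i v_i$ in $V_0$ satisfies $(Ax,x)=\sum_{i=1}^j|\alpha_i|^2\lambda_i\ge\lambda_j$. For the reverse inequality, given any $j$--dimensional subspace $V$, I form $W=\mathrm{span}(v_j,\dots,v_n)$ of dimension $n-j+1$; since $\dim V+\dim W=n+1$, the intersection $V\cap W$ contains a unit vector $x=\sum_{i\ge j}\beta_i v_i$, and then $(Ax,x)=\sum_{i\ge j}|\beta_i|^2\lambda_i\le\lambda_j$, giving $\lambda_A(V)\le\lambda_j$.

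I do not anticipate any substantial obstacle, as the lemma is a standard reference fact used only as a black box in later sections. The only items requiring care are getting the signs and indexing right in the tridiagonal recursion for (1), and the dimension count $\dim V+\dim W=n+1$ that produces the nonzero intersection in (2).
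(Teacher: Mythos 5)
Your proof is correct; the paper gives no argument for this lemma, simply citing Wilkinson, and your two arguments (the rank bound from the tridiagonal recursion plus diagonalizability for (1), and the Courant--Fischer max--min dimension count for (2)) are exactly the standard proofs the citation is pointing to.
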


Now applying Lemma \ref{Ld} to self--adjoint matrices in $\text{M}_n(C(M))$, we have
\begin{Cor}\label{Ca}
Let $A,\,B$ be two self--adjoint matrices in $\text{M}_n(C(M))$. For each $x\in M$,
let $\lambda_1(x)\ge\cdots\ge\lambda_n(x)$, $\mu_1(x)\ge\cdots\ge\mu_n(x)$ be the
eigenvalues of $A(x)$ and $B(x)$, ordered non--increasingly and counted with its multiplicity,
respectively. Then, for every $x,\,y\in M$ and $j=1,\cdots,n$,
\begin{enumerate}
\item[$(1)$] $\vert\lambda_j(x)-\lambda_j(y)\vert\le\|A(x)-A(y)\|;$
\item[$(2)$] $\vert\lambda_j(x)-\mu_j(x)\vert\le\|A(x)-B(x)\|.$
\end{enumerate}
\end{Cor}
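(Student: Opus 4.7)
The plan is to reduce both parts to a single Weyl-type perturbation inequality for self--adjoint matrices in $\text{M}_n(\mathbb{C})$, which then follows immediately from the min--max characterization in Lemma \ref{Ld}(2). Concretely, I will first prove the following auxiliary statement: if $A,B\in\text{M}_n(\mathbb C)$ are self--adjoint with non-increasingly ordered eigenvalues $\lambda_1\ge\cdots\ge\lambda_n$ and $\mu_1\ge\cdots\ge\mu_n$, then $|\lambda_j-\mu_j|\le\|A-B\|$ for each $j$. Once this is established, part (2) is an immediate specialization at the point $x$, and part (1) follows by applying it to the two matrices $A(x)$ and $A(y)$ in $\text{M}_n(\mathbb{C})$.

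For the auxiliary statement, the first step is to compare the Rayleigh quotients on any common subspace $V\subset\mathbb{C}^n$. For any unit vector $v\in V$,
$$
|(Av,v)-(Bv,v)|=|((A-B)v,v)|\le\|A-B\|,
$$
so $(Av,v)\ge(Bv,v)-\|A-B\|$. Taking the minimum over unit vectors in $V$ yields $\lambda_A(V)\ge\lambda_B(V)-\|A-B\|$ for every nonzero subspace $V$. This is the essential inequality.

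The second step is to pass to eigenvalues via Lemma \ref{Ld}(2). Taking the maximum over all $j$-dimensional subspaces $V$ on both sides gives
$$
\lambda_j=\max_{\dim V=j}\lambda_A(V)\ge\max_{\dim V=j}\lambda_B(V)-\|A-B\|=\mu_j-\|A-B\|,
$$
so $\mu_j-\lambda_j\le\|A-B\|$. The roles of $A$ and $B$ are symmetric, so interchanging them yields $\lambda_j-\mu_j\le\|A-B\|$, and hence $|\lambda_j-\mu_j|\le\|A-B\|$.

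There is no genuine obstacle here; the only point that demands a tiny bit of care is verifying that the Rayleigh-quotient comparison passes cleanly from a fixed subspace to the $\max$--$\min$ formula, which is what the second step accomplishes. After this auxiliary inequality is in hand, part (2) is just its statement with $A\leftrightarrow A(x)$ and $B\leftrightarrow B(x)$, and part (1) is its statement with $A\leftrightarrow A(x)$ and $B\leftrightarrow A(y)$, since both $A(x)$ and $A(y)$ lie in $\text{M}_n(\mathbb{C})$ and the norm on the right is exactly $\|A(x)-A(y)\|$.
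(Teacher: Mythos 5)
Your proof is correct and follows essentially the same route as the paper: compare Rayleigh quotients on an arbitrary subspace to get $\lambda_A(V)\ge\lambda_B(V)-\|A-B\|$, then pass to eigenvalues via the max--min formula of Lemma \ref{Ld}(2) and use symmetry in $A$ and $B$. Your write-up is in fact slightly cleaner, since it states the pointwise Weyl inequality once and derives both parts from it, whereas the paper proves (2) directly and notes that (1) is analogous.
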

\begin{proof} We only prove (2). The proof of (1) is similar.

Let $V$ be any nonzero subspace of $\mathbb C^n$. Then for each $x\in M$ and $\xi\in V$,
\begin{align*}
(A(x)\xi,\xi)&\le\|A(x)-B(x)\|\|\xi\|^2+(B(x)\xi,\xi),\\
(A(x)\xi,\xi)&\ge -\|A(x)-B(x)\|\|\xi\|^2+(B(x)\xi,\xi)
\end{align*}
Thus,
$$
\lambda_{A(x)}(V)\le\|A(x)-B(x)\|+ \lambda_{B(x)}(V), \quad
\lambda_{A(x)}(V)\ge -\|A(x)-B(x)\|+ \lambda_{B(x)}(V)
$$
$\forall\,x\in M$ and consequently, by Lemma \ref{Ld},
$$
\lambda_j(x)\le\|A(x)-B(x)\|+ \lambda_j(x),\quad \mu_j(x)\le \|A(x)-B(x)\|+\lambda_j(x),
$$
i.e., $ \vert\lambda_j(x)-\mu_j(x)\vert\le\|A(x)-B(x)\|$, $\forall\,x\in M$ and $j=1,\cdots,n$.
\end{proof}

Let $\mathrm{Det}$ (resp. $\mathrm{Tr}$) denote the determinant (resp. trace) on $\text{M}_n
(\mathbb C)$. Define functions $\det,\ \tr\colon \text{M}_n(C(M))\rightarrow C(M)$ by
$$
\det(A)(x)=\mathrm{Det}(A(x)),\quad \tr(A)(x)=\mathrm{Tr}(A(x)),\ \forall\,A\in
\text{M}_n(C(M)),\ x\in M
$$
respectively. By means of some theory in Linear Algebra, we have
\begin{Lem}\label{Le}
Let $A,\,B\in \text{M}_n(C(M))$. Then
\begin{enumerate}
\item[$(1)$] $\det(AB)=\det(A)\,\det(B)$, $\tr(A+B)=\tr(A)+\tr(B);$
\item[$(2)$] $\tr(AB)=\tr(BA)$, $\|\tr(A)-\tr(B)\|\le n\,\|A-B\|;$
\item[$(3)$] $\|\det(A)-\det(B)\|\le n!\Big(\sum\limits^{n-1}_{k=0}\|A\|^{k}\|B\|^{n-k-1}\Big)
\|A-B\|;$
\item[$(4)$] $A$ is invertible in $\text{M}_n(C(M))$ iff $\mathrm{Det}(A(x))\not=0$, $\forall\,
x\in M$.
\end{enumerate}
\end{Lem}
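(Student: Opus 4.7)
\medskip

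\noindent\textbf{Proof proposal.} All four statements are pointwise in $x\in M$, so the plan is to reduce everything to classical matrix identities on $\text{M}_n(\mathbb C)$ and then worry only about continuity and uniform bounds.

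For (1) and the algebraic part of (2), I would simply evaluate at each $x\in M$: the equalities $\mathrm{Det}(A(x)B(x))=\mathrm{Det}(A(x))\mathrm{Det}(B(x))$, $\mathrm{Tr}(A(x)+B(x))=\mathrm{Tr}(A(x))+\mathrm{Tr}(B(x))$ and $\mathrm{Tr}(A(x)B(x))=\mathrm{Tr}(B(x)A(x))$ hold for scalar matrices. For the trace norm estimate in (2), note that for any $C\in \text{M}_n(\mathbb C)$ with operator norm $\|C\|$ we have $|\mathrm{Tr}(C)|\le n\|C\|$ (since each diagonal entry is bounded by $\|C\|$, or alternatively the sum of eigenvalues is bounded by $n$ times the spectral radius). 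Applying this to $C(x)=A(x)-B(x)$ and taking the sup over $x$ yields $\|\tr(A)-\tr(B)\|\le n\,\|A-B\|$.

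The main work is the determinant estimate in (3). I would exploit the multilinearity of $\mathrm{Det}$ in columns. Writing $A(x)=(a_1(x),\dots,a_n(x))$ and $B(x)=(b_1(x),\dots,b_n(x))$ column-wise and letting $D_k(x)$ be the matrix whose first $k$ columns are those of $A(x)$ and last $n-k$ columns are those of $B(x)$, I telescope
$$
\mathrm{Det}(A(x))-\mathrm{Det}(B(x))=\sum_{k=0}^{n-1}\bigl[\mathrm{Det}(D_{k+1}(x))-\mathrm{Det}(D_k(x))\bigr].
$$
Each difference is, by multilinearity, the determinant of a matrix $E_k(x)$ whose first $k$ columns come from $A(x)$, whose $(k+1)$-st column is $a_{k+1}(x)-b_{k+1}(x)$, and whose remaining columns come from $B(x)$. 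Using the Leibniz formula together with the elementary bound $|c_{ij}|\le\|C\|$ for any entry of a matrix $C$ in operator norm, I get $|\mathrm{Det}(E_k(x))|\le n!\,\|A(x)\|^{k}\|A(x)-B(x)\|\,\|B(x)\|^{n-k-1}$. Summing over $k$ and taking sup over $x$ produces exactly the stated inequality. The combinatorial constant $n!$ comes from the Leibniz expansion; I do not expect a more delicate Hadamard-type bound to be needed here, so this should be the only nontrivial step.

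For (4), the ``only if'' direction is immediate, since $AB=I$ in $\text{M}_n(C(M))$ forces $\mathrm{Det}(A(x))\mathrm{Det}(B(x))=1$ pointwise by (1). For the converse, if $\mathrm{Det}(A(x))\neq 0$ for every $x\in M$, then $\det(A)$ is a nowhere-vanishing element of $C(M)$, hence invertible in $C(M)$; using the classical adjugate formula $A(x)^{-1}=\mathrm{Det}(A(x))^{-1}\,\mathrm{adj}(A(x))$, the right-hand side defines a continuous matrix-valued function on $M$ (the adjugate is a polynomial in the entries of $A$), and this function is a two-sided inverse of $A$ in $\text{M}_n(C(M))$. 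Thus the only conceptual ingredient beyond pointwise scalar identities is the observation that a nowhere-vanishing continuous complex function on a compact Hausdorff space is invertible in $C(M)$, together with continuity of the adjugate.
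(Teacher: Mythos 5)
Your proof is correct. The paper in fact states this lemma with no proof at all (it is simply attributed to ``some theory in Linear Algebra''), and your argument --- pointwise reduction to scalar matrix identities, the bound $|\mathrm{Tr}(C)|\le n\|C\|$ from the diagonal entries, the column-wise telescoping plus Leibniz expansion for the determinant estimate, and the adjugate formula combined with invertibility of nowhere-vanishing functions in $C(M)$ for (4) --- is exactly the standard material the author is alluding to.
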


\section{A necessary condition}
\setcounter{equation}{0}

\begin{Def}
An element $A\in\text{M}_n(\A)$ $(n\ge 2)$ is said to be approximately
diagonalizable, if for any $\epsilon>0$, there are $U\in U_n(\A)$ and $a_1,\cdots,a_n\in\A$ such that
\begin{equation}\label{ebb}
\|UAU^*-\diag(a_1,\cdots,a_n)\|<\epsilon.
\end{equation}

$\A$ is called to be approximately diagonal $\mathbf{(AD)}$, if for any $n\ge 2$, every self--adjoint element in $\text{M}_n(\A)$ $(n\ge 2)$ can be approximate diagonalization.
\end{Def}

Clearly, if $A$ is self--adjoint (or unitary), then $a_1,\cdots,a_n$ in
(\ref{ebb}) can be chosen as self--adjoint (or unitary).
\begin{Lem}\label{L2a}
Let $P$ be a approximately diagonalizable projection in $\text{M}_n(\A)$
$(n\ge 2)$. Then there are $U\in U_n(\A)$ and projections $p_1,\cdots,p_n\in\A$ such that $UPU^*=\diag(p_1,\cdots,p_n)$.
\end{Lem}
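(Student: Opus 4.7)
The strategy is to upgrade an approximate diagonalization of $P$ to an exact one by a small perturbation: first cut a nearby diagonal matrix down to a diagonal of genuine projections using continuous functional calculus, then absorb the remaining error via the classical fact that two close projections are unitarily equivalent.

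Fix a small $\epsilon>0$ to be chosen later. By the hypothesis and the remark following the definition (applied to the self-adjoint element $P$), there exist $V\in U_n(\A)$ and self-adjoint $a_1,\ldots,a_n\in\A$ with $\|VPV^*-D\|<\epsilon$, where $D=\diag(a_1,\ldots,a_n)$. Using $(VPV^*)^2=VPV^*$, the identity $D^2-D=D(D-VPV^*)+(D-VPV^*)VPV^*-(D-VPV^*)$ yields $\|D^2-D\|\le(\|D\|+2)\epsilon\le(3+\epsilon)\epsilon$; since $D$ is diagonal, $\|a_i^2-a_i\|\le(3+\epsilon)\epsilon$ for every $i$.

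For $\epsilon$ small, the set $\{t\in\mathbb R:|t^2-t|\le(3+\epsilon)\epsilon\}$ is the disjoint union of two closed intervals, one near $0$ and one near $1$, both bounded away from $\tfrac{1}{2}$. Hence $\sigma(a_i)$ splits into two clopen pieces, and continuous functional calculus applied with the function equal to $0$ on the left piece and $1$ on the right piece produces a projection $p_i\in\A$ with $\|p_i-a_i\|=O(\sqrt{\epsilon})$. Setting $Q=\diag(p_1,\ldots,p_n)\in\text{M}_n(\A)$ gives a diagonal projection, and $\|VPV^*-Q\|<1$ once $\epsilon$ is chosen sufficiently small.

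Since $VPV^*$ and $Q$ are then projections in the unital $C^*$-algebra $\text{M}_n(\A)$ at distance less than $1$, the standard polar-decomposition construction (applied to $T=QVPV^*+(1-Q)(1-VPV^*)$, which satisfies $TT^*=1-(Q-VPV^*)^2$ and is therefore invertible) produces a unitary $W=T|T|^{-1}\in U_n(\A)$ with $W(VPV^*)W^*=Q$. Taking $U=WV\in U_n(\A)$ gives $UPU^*=\diag(p_1,\ldots,p_n)$, as required. The only delicate point is the spectral-gap argument for the functional calculus; this is routine once $\epsilon$ is taken small enough, so I do not anticipate any serious obstacle.
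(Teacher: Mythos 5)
Your proof is correct and follows essentially the same route as the paper: approximately diagonalize $P$, perturb the nearby diagonal self-adjoint matrix to a diagonal projection via functional calculus, and then conjugate by a unitary implementing the equivalence of two projections at distance less than $1$. The only difference is that you prove by hand (entry-by-entry spectral-gap argument and the standard $T=QP'+(1-Q)(1-P')$ construction) the two facts the paper simply cites from Lin's book (Lemmas 2.5.4 and 2.5.1).
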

\begin{proof}By assumption, there are $W\in U_n(\A)$ and self--adjoint elements
$a_1,\cdots,$ $a_n\in A$ such that
$$
\|WPW^*-\diag(a_1,\cdots,a_n)\|<\dfrac{1}{\,2\,}.
$$
Put $A=\diag(a_1,\cdots,a_n)$. Then by \cite[Lemma 2.5.4]{L4}, there exists a
projection $Q$ in the $C^*$--algebra generated by $A$ such that
\begin{equation}\label{ecc}
\|WPW^*-Q\|<2\,\|WPW^*-A\|<1.
\end{equation}
Since $A$ is diagonal, $Q$ has the form $Q=\diag(p_1,\cdots,p_n)$, where
$p_1,\cdots,p_n\in\A$ are projections. Finally, Using \cite[Lemma 2.5.1]{L4}
to (\ref{ecc}), we can find $W_0\in U_n^0(\A)$ such that $W_0WPW^*W_0^*=
\diag(p_1,\cdots,p_n)$. Put $U=W_0W$. Then we get the assertion.
\end{proof}

Recall from \cite{HW} that a compact Hausdorff space $M$ is of $\dim M\le n$
iff for each closed subset $A$ of $M$, any continuous map $f\colon A
\rightarrow\mathbf{S^n}$ can be extended to $M$ and if $\dim M<\infty$, then
$\dim M\le n$ iff $i^*\colon\HO^n(M,\mathbb Z)\rightarrow\HO^n(A,\mathbb Z)$
is epimorphic for any closed subset $A$ of $M$, where $\HO^n(M,\mathbb Z)$ is
the n'th C\v ech Cohomology of $M$ and $i^*$ is the induced homomorphism of
the inclusion $i\colon A\rightarrow M$ on $\HO^n(M,\mathbb Z)$.
\begin{Lem}\label{L2b}
Suppose that $C(M)$ is $\mathrm{(AD)}$. Then $\dim M\le 3$.
\end{Lem}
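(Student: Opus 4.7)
The plan is to establish the equivalent inequality $\tsr(C(M)) \le 2$; by Lemma~\ref{La}(2) this forces $\dim M \le 3$. Since $C(M)$ is commutative, $\tsr(C(M)) \le 2$ amounts to density of $\Lg_2(C(M))$ in $C(M)^2$, and by Rieffel's stabilization of topological stable rank it suffices to verify this for $m = 2$. Thus the task reduces to the following: given $(a_1, a_2) \in C(M)^2$ and $\epsilon > 0$, produce $(b_1, b_2) \in C(M)^2$ with $\|b_i - a_i\| < \epsilon$ for $i = 1, 2$ and $(b_1(x), b_2(x)) \ne (0,0)$ for every $x \in M$.

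Let $v = (a_1, a_2)^T \in C(M, \mathbb{C}^2)$ and consider the positive, rank-$\le 1$ self-adjoint matrix $T = vv^* \in \text{M}_2(C(M))$, whose pointwise eigenvalues are $\|v(x)\|^2$ and $0$. Apply the $(\mathrm{AD})$ hypothesis to $T$ with a parameter $\delta > 0$ to obtain $U \in U_2(C(M))$ and self-adjoint $d_1, d_2 \in C(M)$ satisfying $\|UTU^* - \diag(d_1, d_2)\| < \delta$. Corollary~\ref{Ca}(2) applied to $UTU^*$ and $\diag(d_1,d_2)$ gives $\bigl|\|v(x)\|^2 - \max(d_1, d_2)(x)\bigr| \le \delta$ and $|\min(d_1, d_2)(x)| \le \delta$ for every $x$. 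Writing $c_i = U^* e_i$ for the columns of $U^*$, the $i$-th diagonal entry of $UTU^*$ equals $c_i^* T c_i = |v^* c_i|^2$, so $\bigl|\,|v^* c_i|^2 - d_i\,\bigr| \le \delta$; hence at every $x \in M$ at least one of $|v^* c_1(x)|^2, |v^* c_2(x)|^2$ is bounded by $2\delta$.

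The main obstacle is that the index $i$ for which the bound holds can depend on $x$ and switch, so neither column alone serves as a globally good perpendicular direction. This is resolved by a partition-of-unity construction. Let $W_i = \{x \in M : |v^* c_i(x)|^2 < 3\delta\}$; these are open and cover $M$. Choose continuous $\phi_1, \phi_2 \ge 0$ with $\phi_1 + \phi_2 = 1$ and $\mathrm{supp}(\phi_i) \subset W_i$, and set $u = \phi_1 c_1 + \phi_2 c_2 \in C(M, \mathbb{C}^2)$. Pointwise orthonormality of $c_1, c_2$ yields $\|u\|^2 = \phi_1^2 + \phi_2^2 \ge 1/2$, so $\hat u = u/\|u\|$ is a continuous unit vector field; moreover
\[
|v^* \hat u| \le \frac{\phi_1 |v^* c_1| + \phi_2 |v^* c_2|}{\|u\|} \le \sqrt{6\delta}
\]
uniformly, since $\phi_i(x) > 0$ forces $x \in W_i$ and hence $|v^* c_i(x)| < \sqrt{3\delta}$.

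Finally, set $b = v + \epsilon\, \hat u = (b_1, b_2)^T$. Then $\|b_i - a_i\| \le \epsilon$ for $i=1,2$, and at every $x$,
\[
\|b(x)\|^2 = \|v(x)\|^2 + 2\epsilon\, \mathrm{Re}\langle v(x), \hat u(x)\rangle + \epsilon^2 \ge \|v(x)\|^2 - 2\epsilon\sqrt{6\delta} + \epsilon^2.
\]
Choosing $\delta < \epsilon^2/96$ forces $2\epsilon\sqrt{6\delta} < \epsilon^2/2$, so $\|b(x)\|^2 \ge \epsilon^2/2 > 0$ everywhere. Hence $(b_1, b_2) \in \Lg_2(C(M))$ lies within $\epsilon$ of $(a_1, a_2)$, completing the verification that $\Lg_2(C(M))$ is dense in $C(M)^2$.
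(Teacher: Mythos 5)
Your proof is correct, but it takes a genuinely different route from the paper. The paper argues by contradiction using the extension-theoretic characterization of covering dimension: assuming $\dim M>3$, it takes a map $f_0\colon M_0\rightarrow\mathbf{S^3}$ on a closed subset that does not extend to $M$, encodes it as a $2\times 2$ unitary, cuts it off to a normal matrix $N$ on all of $M$, applies the $\mathrm{(AD)}$ hypothesis to $\tfrac{1}{2i}(N-N^*)$, and then, via determinant estimates, a polar decomposition, and a logarithm, manufactures a global extension of $f_0$ --- a contradiction. You instead show directly that $\mathrm{(AD)}$ forces $\tsr(C(M))\le 2$ and then read off $\dim M\le 3$ from Rieffel's formula $\tsr(C(M))=[\dim M/2]+1$ in Lemma~\ref{La}(2): applying approximate diagonalization to the single rank-one positive matrix $vv^*$ produces, after your partition-of-unity patching of the two columns of $U^*$, a continuous unit vector field almost orthogonal to $v$, and perturbing $v$ in that direction removes its common zeros, which for commutative $C(M)$ is exactly membership in $\Lg_2(C(M))$. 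All the estimates check out (the diagonal-entry bound, the use of Corollary~\ref{Ca}(2) to control $\min(d_1,d_2)$, the lower bound $\|u\|^2=\phi_1^2+\phi_2^2\ge 1/2$, and the final choice $\delta<\epsilon^2/96$), and the appeal to Rieffel's stabilization theorem to reduce to $m=2$ is legitimate. Your argument is shorter, quantitative, and avoids both the sphere-extension characterization of dimension and the Tietze-type extension lemma (Lemma~\ref{Lc}); the paper's argument, on the other hand, uses only the definition of dimension and elementary matrix manipulations, at the cost of a longer chain of norm estimates.
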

\begin{proof}If $\dim M>3$, then exist a closed subset $M_0$ of $M$ and a
continuous map $f_0\colon M_0\rightarrow\mathbf{S^3}$ which can not be extended
to $M$. Write $f_0(x)=(f_1(x),f_2(x))^T$, $x\in M_0$, where $f_1,f_2\colon M_0
\rightarrow\mathbb C$ are continuous and $|f_1(x)|^2+ |f_2(x)|^2=1$, $\forall\,
x\in M_0$. Put
$$
U_0(x)=
\begin{bmatrix}f_1(x)&-\overline{f_2(x)}\\ f_2(x)&\overline{f_1(x)}\end{bmatrix},
\quad x\in M_0.
$$
Then $U_0\in U_2(C(M_0))$ and it follows from Lemma \ref{Lc} that there are
an open subset $O$ in $M$ containing $M_0$ and a continuous map $V\colon O
\rightarrow U_2(\mathbb C)$ such that $V\big\vert_{M_0}=U_0$.
Pick a continuous function $h\colon M\rightarrow [0,1]$ such that $h(x)=1$, if
$x\in M_0$ and $h(x)=0$ when $x\in O\backslash M_0$. Now define a normal
element $N\in\text{M}_2(C(M))$ by
$$
N(x)=\begin{cases} h(x)V(x)\qquad& x\in O\\ 0\qquad& x\in M\backslash O \end{cases}
$$
and put $A_1=\dfrac{1}{\,2\,}(N+N^*)$, $A_2=\dfrac{1}{2i}(N-N^*)$. By hypothesis, there are $W\in U_2(C(M))$ and continuous real valued functions $\lambda_1,\,\lambda_2$ on $M$ such that
\begin{equation}\label{edd}
\|WA_2W^*-\diag(\lambda_1,\lambda_2)\|<\dfrac{1}{\,6\,}.
\end{equation}
Note that $A_1=\diag(h\mbox{Re}(f_1),h\mbox{Re}(f_1))$. So if we set
$\mu_j=h\mbox{Re}(f_1)+i\lambda_j$, $j=1,2$, then
\begin{equation}\label{eee}
\|WAW^*-\diag(\mu_1,\mu_2)\|<\dfrac{1}{\,6\,},
\end{equation}
by (\ref{edd}). Applying Lemma \ref{Le} to (\ref{eee}), we get that
\begin{align*}
\|h^2-\mu_1\mu_2\|\le& 2\,(\|WNW^*\|+\|\diag(\mu_1,\mu_2)\|)\\
                  \le& 2(1+1+\dfrac{1}{\,6\,})\,\dfrac{1}{\,6\,}<1
\end{align*}
Set
$$
X=\begin{bmatrix}\mu_1&-(1-h^2)^{1/2}\\ (1-h^2)^{1/2}&\mu_2\end{bmatrix}\in
\text{M}_2(C(M)).
$$
Since $\|1-\det(X)\|=\|h^2-\mu_1\mu_2\|<1,$ $X$ is invertible by Lemma \ref{Le}.
Set $W_1=X(X^*X)^{-1/2}\in U_2(C(M))$. Noting that $h(x)=1$, when $x\in M_0$, we have from
(\ref{eee}),
$$
\|W(x)V(x)(W(x))^*-X(x)\|<\dfrac{1}{\,6\,},\quad x\in M_0.
$$
Simple computation shows that
$$
\|1_2-(X(x))^*X(x)\|<\dfrac{13}{36},\quad \|1_2-((X(x))^*X(x))^{-1/2}\|<
\dfrac{1}{\,2\,},\ \forall\,x\in M_0.
$$
Thus,
$$
\|W(x)V(x)(W(x))^*-W_1(x)\|<\dfrac{1}{\,6\,}+\dfrac{1}{\,2\,}(1+\dfrac{1}{\,6\,})<1,
\quad x\in M_0.
$$
This means that there is a self--adjoint element $B_0$ in $\text{M}_2(C(M_0))$ such that
$$
W(x)V(x)(W(x))^*=\text{exp}(i\,B_0(x))W_1(x),\quad\forall\,x\in M_0.
$$
Choose self--adjoint element $B$ in $\text{M}_2(C(M))$ such that $B\big\vert_{M_0}=B_0$
and put
$$
U=W^* \text{exp}(i\,B)W_1W=\begin{bmatrix}u_{11}&u_{12}\\ u_{21}&u_{22}\end{bmatrix}
\in U_2(C(M)),
$$
$f=(u_{11},u_{21})^T$. Then $f\colon M\rightarrow\mathbf{S^3}$ is continuous with
$f\big\vert_{M_0}=f_0$, a contradicition.
\end{proof}

Let $\text{M}_\infty(C(M))=\dcup^\infty_{n=1}\text{M}_n(C(M))$ under the
inclusion
$$
i_n\colon\text{M}_n(C(M))\rightarrow\text{M}_{n+1}(C(M))\ \mbox{given by}\
i_n(a)=\diag(a,0).
$$
It is well--known that there is an one--to--one and onto correspodence between $n$--dimensional complex vector bundles over $M$ and projections in $\text{M}_\infty(C(M))$ with rank $n$ (i.e., $p$ is a projection
in $\text{M}_m(C(M))$ with m large enough such that $\tr(p)=n$). Let
$\mathrm{V}^1_{\mathbb C}(M)$ denote the isomorphic class of all 1--dimensional complex vector bundles over $M$. Then from \cite{K}, $\mathrm{V}^1_{\mathbb C}(M)\cong\HO^2(M,\mathbb Z)$. Thus $\HO^2(M,\mathbb Z)\cong 0$ iff all 1--dimensional complex vector bundles over $M$ is trivial iff ever projection
with rank one in $\text{M}_\infty(C(M))$ is equivalent to the form $\diag(1,0_s)\in\text{M}_{s+1}(C(M))$ for sufficiently large $s$.
\begin{thm}\label{Tha}
Let $M$ be a compact Hausdorff space such that $C(M)$ is $\mathrm{(AD)}$. Then $\dim M\le 2$ and $\HO^2(M,\mathbb Z)\cong 0$.
\end{thm}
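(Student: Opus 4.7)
The plan is to derive two conclusions from the hypothesis that $C(M)$ is $(\mathrm{AD})$: the cohomological vanishing $\HO^2(M,\mathbb Z)\cong 0$, and the sharper dimension bound $\dim M\le 2$, which strengthens Lemma~\ref{L2b}'s $\dim M\le 3$.

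First I would prove $\HO^2(M,\mathbb Z)\cong 0$. By the vector bundle correspondence recalled just above the theorem, it suffices to show that every rank--one projection in $\text{M}_\infty(C(M))$ is equivalent to $\diag(1,0_s)$ for sufficiently large $s$. So let $p\in\text{M}_n(C(M))$ be a rank--one projection. Since $C(M)$ is $(\mathrm{AD})$, $p$ is approximately diagonalizable, and Lemma~\ref{L2a} upgrades this to an exact diagonalization: there are $U\in U_n(C(M))$ and projections $p_1,\ldots,p_n\in C(M)$ with $UpU^*=\diag(p_1,\ldots,p_n)$. Each $p_i$ is the characteristic function $\chi_{U_i}$ of a clopen set $U_i\subset M$, and because the trace is unitarily invariant and $\tr(p)\equiv 1$, we have $\sum_i\chi_{U_i}=1$ pointwise, so $\{U_i\}$ partitions $M$ into disjoint clopen sets. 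I would then define $V\in U_n(C(M))$ piecewise as the constant transposition matrix swapping coordinates $1$ and $i$ on $U_i$; this is well--defined and continuous because the $U_i$ are disjoint clopen sets, and one checks that $V\diag(p_1,\ldots,p_n)V^*=\diag(1,0,\ldots,0)$ on every $U_i$, hence globally.

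Next I would sharpen the bound to $\dim M\le 2$. By Lemma~\ref{L2b} we already know $\dim M\le 3<\infty$, so the cohomological criterion recalled in the paragraph preceding Lemma~\ref{L2b} applies: $\dim M\le 2$ iff $i^*\colon\HO^2(M,\mathbb Z)\to\HO^2(A,\mathbb Z)$ is epimorphic for every closed $A\subset M$. It therefore suffices to show $\HO^2(A,\mathbb Z)\cong 0$ for each such $A$, as then $i^*$ maps into the zero group and is trivially epimorphic. For this I would observe that $(\mathrm{AD})$ descends to $C(A)$: a self--adjoint $a\in\text{M}_n(C(A))$ lifts, via entrywise Tietze extension followed by symmetrization $b\mapsto(b+b^*)/2$, to a self--adjoint $\tilde a\in\text{M}_n(C(M))$ with $\tilde a\big|_A=a$; approximately diagonalizing $\tilde a$ in $\text{M}_n(C(M))$ and restricting the data to $A$ yields an approximate diagonalization of $a$ in $\text{M}_n(C(A))$, using only that restriction preserves unitaries and that $\|\cdot\|_{C(A)}\le\|\cdot\|_{C(M)}$. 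Applying the first half of this proof to $A$ in place of $M$ then gives $\HO^2(A,\mathbb Z)\cong 0$, as required.

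The only step I expect to need real care is the explicit construction of the piecewise transposition unitary $V$ and the verification that it is continuous; everything else (lifting self--adjoints via Tietze, restriction of unitaries to a closed subset, and the cohomological characterization of covering dimension) is routine once the clopen partition of $M$ has been extracted from Lemma~\ref{L2a}.
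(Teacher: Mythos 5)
Your proposal is correct and follows essentially the same route as the paper: descend $(\mathrm{AD})$ to every closed $A\subset M$ via Tietze extension, use Lemma~\ref{L2a} to diagonalize a rank--one projection exactly, conclude $\HO^2(A,\mathbb Z)\cong 0$ from triviality of line bundles, and combine with Lemma~\ref{L2b} and the cohomological characterization of dimension to get $\dim M\le 2$. The only (harmless) divergence is the final equivalence with $\diag(1,0_{n-1})$: you build a piecewise permutation unitary from the clopen partition determined by the $p_i$, whereas the paper writes down a partial isometry $S$ with $S^*S=p$ and $SS^*=\diag(1,0_{n-1})$ directly; both verifications are valid.
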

\begin{proof}Let $A$ be any closed subspace of $M$ and let $a$ be any self--adjoint element in $\text{M}_n(C(A))$ for $n\ge 2$. Then we can find a
self--adjoint element $\hat a\in \text{M}_n(C(M))$ such that $\hat a\big\vert_A
=a$. This means that $C(A)$ is $\mathrm{(AD)}$ when $C(M)$ is $\mathrm{(AD)}$.

Now let $p$ be a projection in $\text{M}_n(C(A))$ with $\rank(p(x))=1,\ \forall\,x\in A$. Then by Lemma \ref{L2a}, there are projections $p_1,\cdots,
p_n\in C(A)$ and $W\in\text{M}_n(C(A))$ such that
\begin{equation}\label{eff}
WpW^*=\diag(p_1,\cdots,p_n)
\end{equation}
Thus,
$$
p_1(x)+\cdots+p_n(x)=\mathrm{Tr}(\diag(p_1(x),\cdots,p_n(x))
                     =\mathrm{Tr}(p(x))=1,
$$
$\forall\,x\in A$ and hence $p_ip_j=0,\ i\not=j,\ i,j=1,\cdots,n$. Set
$$
S=\begin{bmatrix}p_1&\cdots&p_n\\ 0&\cdots&0\\ \vdots&\ddots&\vdots\\
                  0&\cdots&0\end{bmatrix} W.
$$
Then $S^*S=p,\ SS^*=\diag(1,0_{n-1})$ by (\ref{eff}). This shows that
$\HO^2(A,\mathbb Z)\cong 0$.

Since $\dim M\le 3$ by Lemma \ref{L2b} and $i^*\colon\HO^2(M,\mathbb Z)
\rightarrow\HO^2(A,\mathbb Z)$ is surjective, it follows that $\dim M\le 2$.
\end{proof}

\section{A sufficient condition}
\setcounter{equation}{0}

In this section, we will prove following theorm:
\begin{thm}\label{Thb}
Let $M$ be compact Hausdorff space with $\dim M\le 2$ and $\HO^2(M,\mathbb Z)$
$\cong 0$. Then $C(M)$ is $\mathrm{(AD)}$. Precisely, let $A$ be a self--adjoint
element in $\text{M}_n(C(M))$ $(n\ge 2)$ and let $\lambda_1(x)\ge\cdots\ge
\lambda_n(x)$ be the eigenvalues of $A(x)$, ordered non--increasingly and counted with their multiplicity for each $x\in M$. Then for any $\epsilon>0$,
there is $U\in U_n(C(M))$ such that
$$
\|U^*(x)A(x)U(x)-\diag(\lambda_1(x),\cdots,\lambda_n(x))\|<\epsilon,\
\forall\,x\in M.
$$
\end{thm}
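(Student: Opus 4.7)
The plan is to establish Theorem~\ref{Thb} by a three-stage reduction: (i) tridiagonalize $A$ via Lemma~\ref{Lb}, (ii) perturb the resulting tridiagonal matrix so that its fibre has $n$ distinct eigenvalues at every $x\in M$, and (iii) diagonalize the perturbed matrix by extracting continuous unit eigenvectors from the spectral projections, using the triviality of complex line bundles.

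For stage~(i), Lemma~\ref{La}(2) yields $\tsr(C(M))\le 2$ and $\gsr(C(M))\le 2$, so Lemma~\ref{Lb} produces $U_1\in U_n(C(M))$ and a Hermitian tridiagonal $T\in\text{M}_n(C(M))$ with positive invertible subdiagonal entries $b_1,\dots,b_{n-2}$ and an arbitrary last entry $b_{n-1}\in C(M)$, such that $\|U_1^*AU_1-T\|<\epsilon/3$. At every $x\in M$ with $b_{n-1}(x)\ne 0$, Lemma~\ref{Ld}(1) already gives $n$ distinct eigenvalues of $T(x)$.

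For stage~(ii), at a point $x_0$ with $b_{n-1}(x_0)=0$, the fibre $T(x_0)$ splits as the direct sum of its upper $(n-1)\times(n-1)$ tridiagonal block---whose eigenvalues $\mu_1^{(n-1)}(x_0)>\cdots>\mu_{n-1}^{(n-1)}(x_0)$ are distinct by Lemma~\ref{Ld}(1)---and the scalar $a_{nn}(x_0)$, so repeated eigenvalues occur precisely on
\[
Z=\bigl\{\,x\in M\,:\,b_{n-1}(x)=0\text{ and }a_{nn}(x)=\mu_j^{(n-1)}(x)\text{ for some }j\,\bigr\},
\]
which is cut out by three real equations. Under $\dim M\le 2$ and $\HO^2(M,\mathbb Z)\cong 0$, I claim that the pair $(b_{n-1},a_{nn})$ can be modified by an arbitrarily small continuous amount so that the new section $M\to\mathbb R^3$ avoids the varying family $\{0\}\times\{\mu_j^{(n-1)}(x)\}_{j=1}^{n-1}$ at every $x$: the hypothesis $\dim M\le 2$ handles the primary dimension-counting obstruction, while $\HO^2(M,\mathbb Z)\cong 0$ annihilates the secondary line-bundle-type obstruction that can appear near $\{b_{n-1}=0\}$. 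The resulting $\tilde T$ is Hermitian tridiagonal, $\epsilon/3$-close to $T$, with $n$ distinct eigenvalue functions $\mu_1(x)>\cdots>\mu_n(x)$, continuous on $M$ by Corollary~\ref{Ca}(1).

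For stage~(iii), the strictly positive continuous gaps $\mu_i-\mu_{i+1}$ let us define the spectral projections $P_i=\chi_{\{\mu_i\}}(\tilde T)$ as continuous rank-one projections in $\text{M}_n(C(M))$ via Dunford-integral contours separating successive eigenvalues. Each $P_i$ corresponds to a complex line bundle over $M$, trivial by $\HO^2(M,\mathbb Z)\cong 0$, and therefore admits a unit section $v_i\in C(M)^n$ with $P_iv_i=v_i$ and $\|v_i(x)\|=1$. Since $P_iP_j=\delta_{ij}P_i$ and $\sum_iP_i=1$, the $v_i$ form a pointwise orthonormal frame, so $U_2=[v_1\mid\cdots\mid v_n]\in U_n(C(M))$ satisfies $U_2^*\tilde TU_2=\diag(\mu_1,\dots,\mu_n)$. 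Applying Corollary~\ref{Ca}(2) to replace each $\mu_i$ by $\lambda_i$ within the accumulated error then shows that $U=U_1U_2$ meets the claimed estimate. The principal obstacle I expect is stage~(ii), where both hypotheses of the theorem are used in an essential way; stage~(iii) is then immediate from spectral calculus and the line-bundle classification recalled just before Theorem~\ref{Tha}.
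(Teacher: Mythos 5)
Your overall architecture (tridiagonalize via Lemma~\ref{Lb}, perturb to achieve simple spectrum at every fibre, then trivialize the resulting eigenvector line bundles using $\HO^2(M,\mathbb Z)\cong 0$) is exactly the paper's, and your stages~(i) and~(iii) are sound. The gap is in stage~(ii), which you correctly identify as the crux but do not actually prove. The sentence ``the pair $(b_{n-1},a_{nn})$ can be modified by an arbitrarily small continuous amount so that the new section $M\to\mathbb R^3$ avoids the varying family $\{0\}\times\{\mu_j^{(n-1)}(x)\}_{j=1}^{n-1}$'' is an assertion, not an argument: Lemma~\ref{L3a} only lets you push a triple of real functions off the \emph{origin}, i.e.\ off one fixed point, whereas you need to avoid $n-1$ continuously varying targets simultaneously, and you supply no mechanism for this. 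The appeal to a ``primary dimension-counting obstruction'' and a ``secondary line-bundle-type obstruction annihilated by $\HO^2$'' is a placeholder; in fact $\HO^2(M,\mathbb Z)\cong 0$ plays no role in this stage at all (in the paper it is used only in your stage~(iii)), so the heuristic is also misattributed.

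The paper's resolution of stage~(ii) has two ingredients you are missing. First, the bad set is repackaged as the zero set of a single \emph{triple} of real functions: writing $c_j=a_{jj}-a_{nn}$ and letting $q_k=\det(B_k)$ be the leading principal minors of the shifted tridiagonal block, the condition ``$b_{n-1}(x)=0$ and $a_{nn}(x)$ is an eigenvalue of the upper block'' becomes $\big(\mathrm{Re}\,b_{n-1},\mathrm{Im}\,b_{n-1},q_{n-1}\big)(x)=(0,0,0)$, which Lemma~\ref{L3a} does handle. Second --- and this is the step your proposal cannot bypass --- the perturbed function $b\approx q_{n-1}$ produced by Lemma~\ref{L3a} must be \emph{realized} as the determinant of an actual self-adjoint tridiagonal matrix close to $T$. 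The paper does this using the recurrence $q_{n-1}=c_{n-1}q_{n-2}-b_{n-2}^2q_{n-3}$ and the invertibility of $q_{n-2}^2+q_{n-3}^2$ (which follows because the $b_k$ are invertible, so consecutive minors cannot vanish together), setting $c'_{n-1}=c_{n-1}+(b-q_{n-1})q_{n-2}/(q_{n-2}^2+q_{n-3}^2)$ and $b'_{n-2}=\big(b_{n-2}^2-(b-q_{n-1})q_{n-3}/(q_{n-2}^2+q_{n-3}^2)\big)^{1/2}$, with a quantitative bound ensuring $b'_{n-2}$ stays positive and invertible. Without this realization step your perturbed ``eigenvalue data'' is not attached to any matrix, and the argument does not close. (Two smaller points: Lemma~\ref{Lb} requires $n\ge 3$, so the case $n=2$ needs the separate direct argument the paper gives; and at points where $b_{n-1}(x)\ne 0$ one must first absorb its phase by conjugating with $\diag(1_{n-1},e^{-i\theta})$ before invoking Lemma~\ref{Ld}(1).)
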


To prove this theorm, we need two lemmas.
\begin{Lem}{\rm\cite[Proposition 1.1]{BP}}\label{L3a}
Let $M$ be a compact Hausdorff space with $\dim M\le n$. Then for any real--valued continuous fuctions $f_1,\cdots,f_{n+1}$ on $M$, there are
real--valued continuous functions $g_1,\cdots,g_{n+1}$ on $M$ such that
$\sum\limits^{n+1}_{i=1}g_i^2$ is invertible in $C(M)$ and $\|f_i-g_i\|<\epsilon,\ i=1,\cdots,n+1$.
\end{Lem}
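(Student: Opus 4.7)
The plan is to view the functions $f_1,\ldots,f_{n+1}$ as the components of a single continuous map $f=(f_1,\ldots,f_{n+1})\colon M\to\mathbb R^{n+1}$; then $\sum_{i=1}^{n+1}g_i^2$ is invertible in $C(M)$ precisely when the perturbed map $g=(g_1,\ldots,g_{n+1})$ takes values in $\mathbb R^{n+1}\setminus\{0\}$. So the problem reduces to: approximate $f$ uniformly within $\epsilon$ by a continuous map $g\colon M\to\mathbb R^{n+1}\setminus\{0\}$.

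First I would invoke the Lebesgue covering-dimension characterization of $\dim M\le n$ (valid for any normal space, and in particular for compact Hausdorff $M$) to produce a finite open cover $\{U_\alpha\}_{\alpha=1}^N$ of $M$ such that the oscillation of $f$ on each $U_\alpha$ is at most $\epsilon/3$ \emph{and} whose multiplicity (order) is at most $n+1$, i.e.\ no point of $M$ lies in more than $n+1$ of the $U_\alpha$'s. Choose a continuous partition of unity $\{\phi_\alpha\}$ subordinate to this cover, and for each $\alpha$ pick a base point $x_\alpha\in U_\alpha$ and set $v_\alpha=f(x_\alpha)\in\mathbb R^{n+1}$. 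Then the piecewise--affine approximation $g_0(x)=\sum_\alpha\phi_\alpha(x)v_\alpha$ satisfies $\|f-g_0\|_\infty<\epsilon/3$ by the choice of oscillation.

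Next I would perturb the finitely many vectors $v_\alpha$ to $v_\alpha'\in\mathbb R^{n+1}$ with $\|v_\alpha-v_\alpha'\|<\epsilon/3$, chosen in ``general position'' in the following precise sense: for every subset $S\subset\{1,\ldots,N\}$ with $|S|\le n+1$ that occurs as the support $\{\alpha\colon\phi_\alpha(x)>0\}$ of some $x\in M$, the affine hull of $\{v_\alpha'\colon\alpha\in S\}$ should not contain the origin. Since at most $n+1$ points in $\mathbb R^{n+1}$ span an affine subspace of dimension at most $n$, the configurations $(v_\alpha')_{\alpha\in S}$ whose affine hull passes through $0$ form a proper algebraic (hence nowhere dense, measure zero) subset of $(\mathbb R^{n+1})^{|S|}$; taking the union over the finitely many admissible $S$, a generic choice inside $\prod_\alpha B(v_\alpha,\epsilon/3)$ avoids all of the bad sets simultaneously.

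Finally, define $g(x)=\sum_\alpha\phi_\alpha(x)v_\alpha'$ and let $g_i\in C(M,\mathbb R)$ be its $i$-th coordinate. Then $\|g-g_0\|_\infty\le\max_\alpha\|v_\alpha-v_\alpha'\|<\epsilon/3$, so $\|f_i-g_i\|<\|f-g\|_\infty<2\epsilon/3<\epsilon$; and for each $x\in M$ the value $g(x)$ is a convex combination of $\{v_\alpha'\colon\alpha\in S(x)\}$ with $|S(x)|\le n+1$, hence lies in the affine hull of these vectors, which by construction misses $0$. Therefore $\sum_{i=1}^{n+1}g_i(x)^2>0$ for every $x\in M$, so $\sum_{i=1}^{n+1}g_i^2$ is invertible in $C(M)$. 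The one step that really uses the dimension hypothesis is the existence of a cover that is simultaneously $f$-fine and of multiplicity $\le n+1$; modulo that dimension-theoretic input the rest is a convex/general-position argument, so this is where I would expect the proof to need the most care.
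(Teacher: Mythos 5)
You should first note that the paper contains no proof of this lemma at all: it is quoted verbatim from \cite[Proposition 1.1]{BP}, where it is in turn reduced to the classical dimension--theoretic fact that, when $\dim M\le n$, every continuous map $M\to\mathbb R^{n+1}$ can be uniformly approximated by one missing the origin. Your argument is a correct, self--contained proof of exactly that classical fact, by the standard nerve--plus--general--position method, so it genuinely supplies something the paper only cites. The logic is sound at every step: refining an oscillation--fine cover to one of order $\le n+1$ is precisely the covering--dimension definition of $\dim M\le n$ (equivalent, for compact Hausdorff $M$, to the extension characterization the paper quotes from \cite{HW}); the partition--of--unity approximation $g_0$ is within the oscillation bound of $f$; and since each $g(x)$ is a convex combination of at most $n+1$ of the $v_\alpha'$, forcing the relevant affine hulls off the origin does give $\sum_i g_i(x)^2>0$ everywhere. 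Two small refinements are worth making. First, the set of configurations $(v_\alpha')_{\alpha\in S}$ whose affine hull contains $0$ is not itself Zariski closed (two equal nonzero points have degenerate ``linear'' data but their affine hull is a single point); what is true, and all you need, is that it is \emph{contained} in the proper algebraic set where the $(n+2)\times(|S|+1)$ matrix with columns $(v_\alpha',1)$ and $(0,\dots,0,1)$ drops below rank $|S|+1$, which is proper because $|S|+1\le n+2$. Second, a cleaner sufficient condition is simply to perturb so that for each admissible $S$ the vectors $\{v_\alpha'\}_{\alpha\in S}$ are linearly independent in $\mathbb R^{n+1}$ (possible since $|S|\le n+1$); linear independence forces any vanishing combination to have all coefficients zero, hence coefficient sum $0\ne 1$, so $0$ is not in the affine hull. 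With these cosmetic adjustments your proof is complete and, arguably, more elementary than chasing the statement back through \cite{BP} to the dimension--theory literature.
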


Let $H_1,H_2$ be two complex Hilbert spaces and let $B(H_1,H_2)$ denote the set of all bounded linear operators from $H_1$ to $H_2$. For $T\in
B(H_1,H_2)$, we denote by $\Ker T$ (resp. $\ran(T)$) the null space (resp. range) of $T$.
\begin{Lem}\label{L3b}
Let $T(x)$ be a continuous map from $M$ to $B(H_1,H_2)$ such that $\dim\Ker  T(x)=n$ and $\ran(T(x))$ is closed in $H_2$, $\forall\,x\in M$. Set
$$
E(T)=\{(x,\xi)\in M\times H_1\vert\, T(x)\xi=0\},\ \pi(x,\xi)=x,\
\forall\,(x,\xi)\in E_T.
$$
Then $(E(T),\pi,M)$ is an $n$--dimensional complex vector bundle over $M$.
\end{Lem}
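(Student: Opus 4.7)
The plan is to check the one non-trivial axiom in the definition of a complex vector bundle of rank $n$, namely local triviality; the remaining requirements are essentially automatic. The fiber $\pi^{-1}(x)$ is naturally identified with $\Ker T(x)$, so it already carries the structure of an $n$-dimensional complex vector space; $E(T)$ is closed in $M\times H_1$ by continuity of $T$ in norm; and $\pi$ is the restriction of the first-coordinate projection, hence continuous and surjective. So everything reduces to producing, around each point of $M$, a continuous, fiberwise linear trivialization.

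Fix $x_0\in M$ and set $V_0=\Ker T(x_0)$, with $P_0$ the orthogonal projection of $H_1$ onto $V_0$. Since $T(x_0)$ has closed range, its restriction $T(x_0)|_{V_0^\perp}$ is injective with closed range, hence bounded below by some constant $c>0$. Pick a neighborhood $U$ of $x_0$ on which $\|T(x)-T(x_0)\|<c/2$; then $\tilde T(x):=T(x)|_{V_0^\perp}$ is bounded below by $c/2$, so $\Ker T(x)\cap V_0^\perp=\{0\}$. A dimension count in the $n$-dimensional quotient $H_1/V_0^\perp$ (the composite $\Ker T(x)\hookrightarrow H_1\twoheadrightarrow H_1/V_0^\perp$ is injective between spaces of the same dimension $n$, hence bijective) upgrades this to $H_1=\Ker T(x)\oplus V_0^\perp$ for every $x\in U$; let $Q(x)$ denote the (generally non-orthogonal) projection onto $\Ker T(x)$ along this splitting.

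The main obstacle is that the subspaces $\Ker T(x)$ move with $x$, so it takes a small argument to show $x\mapsto Q(x)$ is norm-continuous on $U$. I would use the Moore--Penrose pseudoinverse of $\tilde T(x)$: since $\tilde T(x)^*\tilde T(x)\in B(V_0^\perp)$ is invertible (bounded below by $c^2/4$), and since for any $\xi=\xi_1+\xi_2\in\Ker T(x)\oplus V_0^\perp$ one has $T(x)\xi=\tilde T(x)\xi_2$, the explicit formula
$$
Q(x)\xi\;=\;\xi-(\tilde T(x)^*\tilde T(x))^{-1}\tilde T(x)^*\,T(x)\xi,\qquad \xi\in H_1,
$$
holds on $U$. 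Continuity of $T$ together with continuity of inversion at invertible elements of $B(V_0^\perp)$ then gives continuity of $Q$.

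Finally, the map $\phi\colon\pi^{-1}(U)\to U\times V_0$, $(x,\xi)\mapsto(x,P_0\xi)$, is continuous and fiberwise linear, and its inverse is $(x,v)\mapsto(x,Q(x)v)$: indeed $Q(x)v\in\Ker T(x)$, and writing $v=Q(x)v+(v-Q(x)v)$ with $v-Q(x)v\in V_0^\perp$ and applying $P_0$ gives $P_0Q(x)v=P_0v=v$. Continuity of $Q$ makes $\phi^{-1}$ continuous, so $\phi$ is a fiberwise linear homeomorphism; choosing an orthonormal basis of $V_0$ to identify $V_0\cong\mathbb C^n$ produces the required local chart and completes the proof that $(E(T),\pi,M)$ is a rank-$n$ complex vector bundle.
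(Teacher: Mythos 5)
Your proof is correct, and it follows the same basic strategy as the paper: fix $x_0$, exploit the closed range of $T(x_0)$ to get a generalized-inverse/bounded-below estimate that is stable under small norm perturbations, and use this to build a continuous local trivialization of the kernels. The difference is in execution. The paper forms the Moore--Penrose inverse $G$ of the full operator $T(x_0)$, sets $\phi(x)=(I_{H_1}+G(T(x)-T(x_0)))^{-1}$ on a neighbourhood where $\|T(x)-T(x_0)\|<1/\|G\|$, and cites a perturbation result to conclude $\Ker T(x)=\phi(x)\Ker T(x_0)$, after which it transports a fixed basis of $\Ker T(x_0)$. You instead restrict $T(x)$ to $V_0^\perp$, show it stays bounded below, use the rank hypothesis for the dimension count $H_1=\Ker T(x)\oplus V_0^\perp$, and write down the idempotent $Q(x)$ explicitly via the pseudoinverse of the restriction; the chart $(x,\xi)\mapsto(x,P_0\xi)$ with inverse $(x,v)\mapsto(x,Q(x)v)$ is then checked by hand. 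What your route buys is self-containment: the key step the paper outsources to a citation is replaced by the explicit formula for $Q(x)$ and the elementary bijectivity argument, and the continuity of the trivialization is visibly reduced to continuity of inversion at invertible elements of $B(V_0^\perp)$. Both arguments use the closed-range hypothesis in the same essential way, and both use $\dim\Ker T(x)\equiv n$ to upgrade injectivity to bijectivity on the fibers.
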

\begin{proof} Let $x_0\in M$ be an arbitrary point. Let $P$ (resp. $Q$)
be the projection of $H_1$ (resp. $H_2$) onto $\Ker T(x_0)$ (resp.
$\ran(T(x_0)$). Then there is $G\in B(H_2,H_1)$ such that $GT(x_0)=I_{H_1}-P$, $T(x_0)G=Q$
($G$ is called the generalized inverse of $T(x_0)$, denoted by $T(x_0)^+$).

Since $T(x)$ is continuous at $x_0$, we can find a closed neighbourhood $U(x_0)$ of $x_0$ in $M$ such that $\|T(x)-T(x_0)\|<\dfrac{1}{\|A\|}$ whenever $x\in U(x_0)$. Thus, $\phi(x)=(I_{H_1}+G(T(x)-T(x_0)))^{-1}$ is
a continuous map from $U(x_0)$ to the the group of invertible operators in
$B(H_1)$ and furthermore, $\Ker T(x)=\phi(x)\Ker T(x_0)$ by \cite[Proposition 3.1]{CX}.
Now let $\{e_1,\cdots,e_n\}$ be a basis for
$\Ker T(x_0)$ and put $e_j(x)=\phi(x)e_j$, $j=1,\cdots,n$, $x\in U(x_0)$.
Then $\{e_1(x),\cdots,e_n(x)\}$ forms a continuous basis for $\Ker T(x)$,
$x\in U(x_0)$. This shows that $(E(T),\pi,M)$ is a vector bundle of
dimension $n$ (cf.\cite{H}).
\end{proof}

Proof of Theorem \ref{Thb}. We firt assume that $A=\begin{bmatrix}
a_{11}&a_{12}^*\\ a_{21}&a_{22}\end{bmatrix}$ is self--adjoint in
$\text{M}_2(C(M))$. Write $a_{12}=a^{(1)}_{12}+i\,a_{12}^{(2)}$. Since $\dim
M\le 2$, it follows from Lemma \ref{L3a} that for any $\epsilon>0$, there are
continuous functions $b_1,b_2,b_3\colon M\rightarrow\mathbb R$ such that
$$
\|a_{11}-a_{22}-b_1\|<\epsilon,\ \|a^{(1)}_{12}-b_2\|<\epsilon,\
\|a^{(2)}_{12}-b_3\|<\epsilon
$$
and $b_1^2+b_2^2+b_3^2$ is invertible. Set $b=b_2+ib_3$ and
$B=\begin{bmatrix}a_{11}&b^*\\ b&a_{11}-b_1\end{bmatrix}$. Then
\begin{equation}\label{egg}
\|A-B\|=\Big\|\begin{bmatrix}0&(b-a_{12})^*\\ b-a_{12}& a_{22}-a_{11}-b_1
\end{bmatrix}\Big\|<2\,\epsilon
\end{equation}
and  for each $x\in M$, two eigenvalues of $B(x)$
\begin{align*}
\mu_1(x)&=\dfrac{1}{\,2\,}\bigg[2a_{11}(x)-b_1(x)+\sqrt{b^2_1(x)+4\vert b(x)\vert^2}\bigg],\\
\mu_2(x)&=\dfrac{1}{\,2\,}\bigg[2a_{11}(x)-b_1(x)-\sqrt{b^2_1(x)+4\vert b(x)\vert^2}\bigg]
\end{align*}
are not equal for each $x\in M$. Thus, $\Ker(B(x)-\mu_j(x))=1$, $\forall\,x\in M$, $j=1,2$
and hence by Lemma \ref{L3b} $(E(B-\mu_j),\pi,M)$ is an one--dimensional
complex vector bundle which is also trivial for $\HO^2(M,\mathbb Z)\cong 0,\ j=1,2$. This implies that there are continuous maps $\xi_1,\xi_2\colon M
\rightarrow\mathbb C^2$ with $\|\xi_j\|=1$ such that $B(x)\xi_j(x)=\mu_j(x)
\xi_j(x)$, $\forall\,x\in M$ and $j=1,2$. Moreover, from $\mu_1(x)>\mu_2(x)$,
we have $(\xi_1(x),\xi_2(x))=0$, $\forall\,x\in M$. Put $U(x)=(\xi_1(x),\xi_2(x))$, $x\in M$. Then $U\in U_2(C(M))$ and
\begin{equation}\label{ehh}
U^*(x)B(x)U(x)=\begin{bmatrix}\mu_1(x)\\ \ &\mu_2(x)\end{bmatrix}.
\end{equation}
Let $\lambda_1(x),\lambda_2(x)$ be the eigenvalus of $A(x)$, for each $x\in M$.
Then by Corollary \ref{Ca} (2) and (\ref{egg}), $\|\lambda_j-\mu_j\|<2\epsilon,
\ j=1,2$. Finally, combining this with (\ref{egg}) and (\ref{ehh}), we obtain
$\|U^*AU-\diag(\lambda_1,\lambda_2)\|<4\epsilon$.

Now Let $A=(a_{ij})_{n\times n}$ with $a_{ij}\in C(M)$ and $a_{ij}^*=a_{ji}$,
$i,j=1,\cdots,n$, $n\ge 3$. By Lemma \ref{La}, $\tsr(C(M))\le 2$ and $\gsr(C(M))\le 2$ when $\dim M\le 2$. Thus by Lemma \ref{Lb}, for any $\epsilon
\in (0,\frac{1}{2})$, there are $b_1,\cdots,b_{n-1}\in C(M)$ with $b_1,\cdots,
b_{n-2}>0$ and $U_1\in U_n(C(M))$ such that
\begin{equation}\label{eii}
\Bigg\|U_1^*AU_1-\begin{bmatrix} a_{11}& b_1\\ b_1&a_{22}&\ddots\\ \ &\ddots\  &\ddots&b_{n-1}^*\\
 \ &\ &b_{n-1}&a_{nn}\end{bmatrix}\Bigg\|<(n-1)^{3/2}
 \dfrac{\epsilon}{(n-1)^{3/2}}=\epsilon.
\end{equation}
Write $c_j=a_{jj}-a_{nn}$, $j=1,\cdots,n$ and
$$
B_0=\begin{bmatrix} c_1& b_1\\ b_1&c_2&\ddots\\ \ &\ddots&\ddots
   &b^*_{n-1}\\  \ &\ & b_{n-1}&0
\end{bmatrix},\quad
B_k=\begin{bmatrix}c_1& b_1\\ b_1&c_2&\ddots\\ \ &\ddots&\ddots
           &b^*_{k-1}\\  \ &\ & b_{k-1}&c_k,
     \end{bmatrix}
$$
$k=2,\cdots,n-1$. Put $q_k=\det(B_k),\ q_1=c_1,\ q_0=1,\ k=2,\cdots,n-1$. Then
$q_1,\cdots,q_{n-1}$ are real--valued functions in $C(M)$ and
\begin{equation}\label{ejj}
q_k=c_kq_{k-1}-b_{k-1}^2q_{k-2},\ k=2,\cdots,n-1.
\end{equation}
From (\ref{eii}), we can deduce that $q_{k-1}^2+q_{k-2}^2$ is invertible in
$C(M)$, $k=2,\cdots,n-1$. Write $b_{n-1}=b_{n-1}^{(1)}+ib_{n-1}^{(2)}$. Since
$\dim M\le 2$, it follows from Lemma \ref{L3a} that there are real continuous
functions $d_{n-1}^{(1)},d_{n-1}^{(2)},b$ on $M$ such that
$$
\|b_{n-1}^{(j)}-d_{n-1}^{(j)}\|<\epsilon,\quad
\|q_{n-1}-b\|<\dfrac{\min\{\epsilon,m\epsilon,0.5m^2\}}{(\|q_{n-2}\|+\|
q_{n-3}\|)\|(q_{n-2}^2+q_{n-3}^2)^{-1}\|},
$$
$j=1,2$ and $b^2+(d_{n-1}^{(1)})^2+(d_{n-1}^{(2)})^2$ is invertible in $C(M)$,
where $m=\min\limits_{x\in M}b_{n-2}(x)>0$. Note that
\begin{align*}
  b_{n-2}^2(x)-\dfrac{(b(x)-q_{n-1}(x))q_{n-3}(x)}{q_{n-2}^2(x)+q_{n-3}^2(x)}&
  \ge m^2-\|(q_{n-2}^2+q_{n-3}^2)^{-1}(b-q_{n-1})q_{n-3}\|\\
  &\ge m^2-\|(q_{n-2}^2+q_{n-3}^2)^{-1}\|\|(b-q_{n-1})\|\|q_{n-3}\|\\
  &\ge m^2-0.5m^2=0.5m^2>0
\end{align*}
Put $d_{n-1}=d_{n-1}^{(1)}+i\,d_{n-1}^{(2)}$ and
\begin{align*}
c'_{n-1}&=c_{n-1}+\dfrac{(b-q_{n-1})q_{n-2}}{q^2_{n-2}+q^2_{n-3}},\quad
b'_{n-2}=\Big(b^2_{n-2}-\dfrac{(b-q_{n-1})q_{n-3}}{q^2_{n-2}+q^2_{n-3}}
\Big)^{1/2}>0\\
B_{n-1}'&=\begin{bmatrix}B_{n-2}&X\\ X^T&
c'_{n-1}\end{bmatrix},\qquad\qquad\
B=\begin{bmatrix}B'_{n-1}&Y\\ Y^T&0\end{bmatrix},
\end{align*}
where $X=(0,\cdots,0,b'_{n-2})^T\in(C(M))^{n-2},\
Y=(0,\cdots,0,d_{n-1})^T\in(C(M))^{n-1}$.  Then
\begin{align*}
\|c'_{n-1}-c_{n-1}\|<&\epsilon,\ \|b'_{n-2}- b_{n-2}\|<\epsilon,\
\|B'_{n-1}-B_{n-1}\|<3\epsilon\\
\|B-B_0\|\le&\|B'_{n-1}-B_{n-1}\|+2\|d_{n-1}-b_{n-1}\|<6\epsilon\
\text{and}\\
\det(B'_{n-1})=&c'_{n-1}q_{n-2}-(b'_{n-2})^2q_{n-3}=b.
\end{align*}

Now let $\mu(x)$ be an eigenvalue of $B(x)$ for each $x\in M$. Put
$M_1=\{x\in M\vert\,d_{n-1}(x)=0\}$. Let $x_0\in M\backslash M_1$.
Then $d_{n-1}(x_0)=\vert d_{n-1}(x_0)\vert\text{exp}(i\theta)$ for some
$\theta\in\mathbb R$ and
$$
\begin{bmatrix}1_{n-1}\\ \ &\text{exp}(-i\theta)\end{bmatrix}B(x_0)
\begin{bmatrix}1_{n-1}\\ \ &\text{exp}(i\theta)\end{bmatrix}=
\begin{bmatrix}B'_{n-1}(x_0)&Z\\ Z^T& 0\end{bmatrix},
$$
where $Z=(0,\cdots,0,\vert d_{n-1}(x_0)\vert)^T\in(C(M))^{n-1}$.
In this case, $\dim\Ker(B(x_0)-\mu(x_0)I_n)=1$ by Lemma \ref{Ld} (1).
Suppose that $x_0\in M_1$. If $\mu(x_0)\not=0$, then $\mu(x_0)$ must be an eigenvalue of
$B'_{n-1}(x_0)$. Thus, by Lemma \ref{Ld} (1),
$\dim\Ker(B'_{n-1}(x_0)-\mu(x_0)I_{n-1})=1$ and hence
$\dim\Ker(B(x_0)-\mu(x_0)I_n)=1$; If $\mu(x_0)=0$, then from $
b^2(x)+\vert d_{n-1}(x)\vert^2\not=0,\ \forall\,x\in M$ and $\text{Det}
(B'_{n-1}(x_0))=b(x_0),\ d_{n-1}(x_0)=0$, we have
$\text{Det}(B'_{n-1}(x_0))\not=0$, i.e., $0$ is not the eigenvalue of
$B'_{n-1}(x_0)$, so $\dim\Ker(B(x_0)-\mu(x_0)I_n)=1$.

The above shows that $\dim\Ker(B(x)-\mu(x)I_n)=1$ for each $x\in M$.
Let $\mu_1(x)\ge\cdots\ge\mu_n(x)$ be the eigenvalues of $B(x)$, ordered non--increasingly and counted with its multiplicity, for each $x\in M$.
Then $\mu_1(x)>\cdots>\mu_n(x),\ \forall\,x\in M$ and there are continuous maps $\xi_1,\cdots,\xi_n\colon M\rightarrow\mathbb C^n$ with
$\|\xi_i\|=1$ such that $B(x)\xi_i(x)=\mu_i(x)\xi_i(x)$, $\forall\,x\in M$
and $i=1,\cdots,n$ by Lemma \ref{L3b} and the assumption $\HO^2(M,\mathbb Z)\cong 0$. Moreover, $(\xi_i(x),\xi_j(x))=0$, $i\not=j,\ i,j=1,\cdots,n$,
$\forall\,x\in M$. Put $U_2(x)=(\xi_1(x),\cdots,\xi_n(x)),\ t\in M$. Then
$U_2\in U_n(C(M))$ and
$$
U^*_2(x)B(x)U_2(x)=\diag(\mu_1(x),\cdots,\mu_n(x)),\ \forall\,x\in M.
$$
Put $U=U_1U_2$. Then by (\ref{eii}),
\begin{align*}
\|U^*AU-&\diag(\mu_1(x)+a_{nn},\cdots,\mu_n(x)+a_{nn})\|\\
       \le&\|U^*_2(U_1^*AU_1-B_0-a_{nn}1_n)U_2+U^*_2(B_0-B)U_2\\
         &+U_2^*BU_2-\diag(\mu_1(x),\cdots,\mu_n(x))\|\\
       <&\epsilon+6\epsilon=7\epsilon.
\end{align*}
Let $\lambda_1(x)\ge\cdots\ge\lambda_n(x)$ be the eigenvalues of $A(x)$, ordered non--increasingly and counted with its multiplicity, for each $x\in M$. Then we have
$$
\|\lambda_j-\mu_j-a_{nn}\|<7\epsilon,\quad j=1,\cdots,n
$$
by Corollary \ref{Ca} and so that $\|U^*AU-\diag(\lambda_1,\cdots,\lambda_n)\|<
14\epsilon$.
\vspace{2mm}

Now we consider the approximate diagonalization of unitary matrices over
$C(M)$. We have
\begin{Prop}
Let $M$ be compact metric space.
\begin{enumerate}
\item[$(1)$]If $\dim M\le 2$ and $\HO^2(M,\mathbb Z)\cong 0$, then every unitary element in $U_n^0(C(M))$ is approximately diagonalizable$;$
\item[$(2)$] Every unitary matrix over $C(M)$ is approximately diagonalizable
iff $\dim M$ $\le 2$ and $\HO^j(M,\mathbb Z)\cong 0$, $j=1,2$.
\end{enumerate}
\end{Prop}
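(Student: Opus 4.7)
The plan is to deduce both parts from Theorem 3.4 by reducing unitaries to self-adjoints, using the determinant fibration $SU(n)\to U(n)\xrightarrow{\det}S^1$ to translate between $U_n(C(M))$, $U_n^0(C(M))$, and $\HO^1(M,\mathbb Z)$.

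For part (1), given $U\in U_n^0(C(M))$ and $\epsilon>0$, I would produce a self-adjoint $A\in\text{M}_n(C(M))$ with $\|U-\exp(iA)\|<\epsilon'$, to which Theorem 3.4 then applies: a diagonalizer $W$ of $A$ with error $\epsilon'$ yields a diagonalizer of $U$ with error of order $\epsilon$ by Lipschitz continuity of $\exp$ on bounded self-adjoint sets. The $A$ itself would be constructed by subdividing a continuous path from $I$ to $U$, writing $U$ as a finite product of exponentials of small self-adjoints, and then consolidating to a single exponential using $\dim M\le 2$ and $\HO^2(M,\mathbb Z)\cong 0$: these ensure that the ``bad locus'' obstructing a global principal logarithm (namely the pullback under $U$ of $\{V\in U(n):-1\in\sigma(V)\}$) can be pushed off by a perturbation of size $<\epsilon'$, the obstruction to selecting the required eigenvector data coherently living precisely in $\HO^2(M,\mathbb Z)$.

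For part (2), the ``if'' direction reduces to (1) once I verify $U_n(C(M))=U_n^0(C(M))$. Since $\pi_k(SU(n))=0$ for $k\le 2$ when $n\ge 2$, standard obstruction theory on a compact metric space of dimension $\le 2$ makes the induced map $[M,U(n)]\to[M,S^1]\cong\HO^1(M,\mathbb Z)$ a bijection, so $\HO^1=0$ kills everything. For the ``only if'' direction, necessity of $\dim M\le 2$ and $\HO^2=0$ follows from Theorem 3.1 after passing from unitaries to self-adjoints via the Cayley transform $U=(A-iI)(A+iI)^{-1}$ (with a translation if needed to arrange $1\notin\sigma(U)$, so that $A$ is a continuous function of $U$). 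For $\HO^1=0$: given $u\in U(C(M))$, test the hypothesis on the $n\times n$ companion matrix of $\lambda^n-u$, whose eigenvalues at each $x$ are the $n$-th roots of $u(x)$. For approximation error smaller than the gap $2\sin(\pi/n)$ between consecutive $n$-th roots, Corollary 2.7 labels the diagonal approximants $v_1,\ldots,v_n\in U(C(M))$ as specific root branches on each connected component, yielding $v_j^n\approx u$ and hence $n[v_j]=[u]$ in $\HO^1(M,\mathbb Z)$. Letting $n$ vary, $[u]$ is divisible by every positive integer, and the structure of Čech $\HO^1$ for compact metric $M$ with $\dim M\le 2$ and $\HO^2=0$ rules out nontrivial divisible elements, so $[u]=0$.

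The main obstacle is the $\exp(iA)$ representation in part (1): the product-of-exponentials decomposition from a path in $U_n^0(C(M))$ is routine, but actually collapsing this product into a single $\exp(iA)$ within a prescribed norm error is where the hypotheses $\dim M\le 2$ and $\HO^2(M,\mathbb Z)\cong 0$ have to be invoked precisely, and this is the step that genuinely requires care.
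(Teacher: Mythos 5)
Your overall architecture matches the paper's: reduce (1) to Theorem \ref{Thb} via a single--exponential approximation of $U$, reduce the ``if'' of (2) to (1) by showing $U_n(C(M))$ is connected, and obtain the ``only if'' by feeding unitaries built from self--adjoints (resp.\ from a given $f\in U(C(M))$) into Theorem \ref{Tha} and determinant estimates. But there are two genuine gaps. First, in (1) the step you yourself flag as delicate is exactly the one you do not prove. Writing $U\in U_n^0(C(M))$ as a finite product of exponentials of small self--adjoints is indeed routine; collapsing that product into a single $\exp(iA)$ within a prescribed error is the computation of a $C^*$--exponential rank, and your sketch (``push the bad locus off by a small perturbation; the obstruction lives in $\HO^2(M,\mathbb Z)$'') is not an argument -- the obstruction to a global continuous branch of the logarithm of $U$ is not simply a second cohomology class, and nothing you say controls the size of the correction needed after consolidating two exponentials. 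The paper does not prove this step either: it quotes \cite[Theorem 2.1]{Ph} to get a self--adjoint $A$ with $\|U-\exp(iA)\|<\epsilon/2$ and then applies Theorem \ref{Thb}. Without that citation or an equivalent exponential--length argument, your part (1) is incomplete.

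Second, in the necessity of $\HO^1(M,\mathbb Z)\cong 0$, your companion--matrix construction is sound and actually extracts more than the paper does (the paper uses only the $2\times2$ companion matrix of $\lambda^2-f$, gets $[f]=2[\lambda]$, and then appeals to $[f]$ being a generator of a torsion--free group; you get $n$--divisibility of $[u]$ for every $n$). But your closing claim -- that \v Cech $\HO^1$ of a compact metric space with $\dim M\le 2$ and $\HO^2(M,\mathbb Z)\cong 0$ contains no nontrivial divisible elements -- is false: the universal solenoid $\Sigma=\varprojlim\,(S^1,\ z\mapsto z^{k})$ is a $1$--dimensional compact metric space with $\HO^2(\Sigma,\mathbb Z)\cong 0$ and $\HO^1(\Sigma,\mathbb Z)\cong\mathbb Q$, which is divisible. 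So the argument does not close on such spaces (the paper's own ``generator plus torsion--free'' step founders on the same examples, since these groups need not be finitely generated); finishing requires either exhibiting a non--approximately--diagonalizable unitary directly over such $M$ or restricting the class of spaces. The remaining steps are fine: your Cayley--transform reduction for the necessity of $\dim M\le2$ and $\HO^2\cong0$ does the same job as the paper's identity $A=\tfrac12\|A\|(V+V^*)$ with $V=A/\|A\|+i\sqrt{1_n-(A/\|A\|)^2}$, and your connectivity argument via $\pi_k(SU(n))=0$ for $k\le2$ is a topological substitute for the paper's route through $\csr(C(M))\le2$ and the Arens--Royden theorem.
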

\begin{proof}(1) Let $U\in U_n^0(C(M))$. By \cite[Theorem 2.1]{Ph}, for any
$\epsilon\in (0,1)$, there is self--adjoint element $A$ in $\text{M}_n(C(M))$
such that $\|U-\text{exp}(i A)\|<\epsilon/2$. Since $A$ can be approximate
diagonalization by Theorem \ref{Thb}, we get that by simple computation,
$U$ is approximately diagonalizable.

(2) Assume that $\dim M\le 2$ and $\HO^j(M,\mathbb Z)\cong 0$, $j=1,2$.
Then we have $\csr(C(M))\le 2$ and for any $U\in U_n(C(M))$ there are
$U_0\in U_n^0(CM))$ and $a\in U(C(M))$ such that $U=U_0\diag(1_{n-1},a)$
by \cite[Lemma 2.1]{X4}. Note that $U(C(M))\slash U_0(C(M))\cong\HO^1(M,\mathbb Z)$
(Arens and Royden's Theorem). So $U_n(C(M))$ is connected $n\ge 2$ when $\HO^1(M,\mathbb Z)\cong 0$. Then the claim follows from (1).

On the other hand, if ever unitary matrix over $C(M)$ is approximately diagonalizable, then for any self--adjoint matrix $A\in\text{M}_n(C(M))$,
$n\ge 2$, we can deduce from following equation
$
V=\dfrac{A}{\|A\|}+i\sqrt{1_n-\Big(\dfrac{A}{\|A\|}\Big)^2}\in U_n(C(M))
$
that $A=\dfrac{1}{\,2\,}\|A\|(V+V^*)$ is approximately diagonalizable and hence
$\dim M\le 2$ and $\HO^2(M,\mathbb Z)\cong 0$ by Theorem \ref{Tha}.

Now we prove $\HO^1(M,\mathbb Z)\cong 0$. Let $f \in U(C(M))$ such that the
equivalece class $[f]$ of $f$ in $U(C(M))\slash U_0(C(M))$ is a generator.
Since the unitary matrix $U=\begin{bmatrix}0&1\\ f&0\end{bmatrix}$ is
approximately diagonalizable, there are $\lambda_1,\lambda_2\in U(C(M))$ and
$W\in U_2(C(M))$ such that
\begin{equation}\label{ekk}
\|WUW^*-\diag(\lambda_1,\lambda_2)\|<\dfrac{1}{\,6\,}.
\end{equation}
By using Lemma \ref{Le} to (\ref{ekk}), we get that
$$
\|\lambda_1+\lambda_2\|<\dfrac{1}{\,3\,},\quad\|-f-\lambda_1\lambda_2\|
<\dfrac{2}{\,3\,}
$$
and hence $\|f-\lambda^2_1\|<1$. Put $\lambda=\vert\lambda_1\vert^{-1}\lambda_1$. Then $[f]=2[\lambda]$ in
$U(C(M))\slash U_0(C(M))$. Noting that $\HO^1(M,\mathbb Z)$ is torsion--free,
we have $[f]=0$.
\end{proof}


\begin{thebibliography}{99}
\bibitem{BP}L.G. Brown and G.K. Pedersen, {\it $C^*$--algebras of real rank zero},
J. Funct. Anal., 99 (1991), 131--149.
\bibitem{CX}G. Chen and Y. Xue, {\it Perturbation analysis for the operator equation $Tx=b$
in Banach spaces}, J. Math. Anal. Appl., 212 (1997), 107--125.
\bibitem{CE}M.D. Choi and G.A. Elliott, {\it Density of selfadjoint elements with finite
spectrum in an irrational $C^*$--algebra}, Math. Scand., 67 (1990), 73--86.
\bibitem{D}J. Dieudonn\'e, {\it An extension of Tietze's theorem}, Pacific J. Math.,
1 (1951), 353--367.
\bibitem{GP}K. Grove and G.K. Pedersen, {\it Diagonalizing matrices over $C(X)$},
J. Funct. Anal., 59 (1984), 65--89.
\bibitem{H}D. Husemoller, {\it Fibre Bundles}, (reprinted), Springer--Verlag,
New York, Heidelberg, Berlin, 1976.
\bibitem{HW}W. Hurewicz and H. Wallman, {\it Dimension Theory}, Princeton University Press, 1948.
\bibitem{K}R.V. Kadison, {\it Diagonlizing matrices}, Amer. J. Math., 106 (1984),
1451--1468.
\bibitem{L4}H. Lin, {\it An Introduction to the Classification of Amenable $C^*$--algebras},
World Scientific, 2001.
\bibitem{K}M. Karoubi, {\it $K$--Theory,Springer--Verlag}, Berlin, 1978.
\bibitem{N}V. Nistor, {\it Stable range for tensor products of extensions of $\mathcal K$ by
$C(X)$}, J. Operator Theory, 16 (1986), 387--396.
\bibitem{Ph}P.C. Phillips, {\it Simple $C^*$--algebras with the property weak (FU)}, Math.
Scand., 69 (1991), 127--151.
\bibitem{R1}M.A. Rieffel, {\it Dimensionl and stable rank in the $K$--theory
of $C^*$--Algebras}, Proc. London Math. Soc., 46 (1983), 301--333.
\bibitem{R2}M.A. Rieffel, {\it the homotopy groups of the unitary groups of non--commutative
tori}, J. Operator Theory, 17 (1987), 237--254.
\bibitem{Ro}M. R\o{o}rdam, {\it Classification of nuclear $C^*$--algeras}, (In J. Cuntz and V. Jones,
editors, Encyclopaedia of Mathematical Sciences) Vol 126, Springer--Verlag, 2001.
\bibitem{Th}K. Thomsen, {\it Homomorphisms between finite dirct sums of circle algebras},
Linear and Multilinear Algebra, 32 (1992), 33--50.
\bibitem{W}J.H. Wilkinson, {\it The Algebraic Eigenvalue Problem}, Oxford University Press, 1965.
\bibitem{X1}Y. Xue, {\it The connected stable rank of the purely infinite simple $C^*$--algebras},
Proc. Amer. Math. Soc., 127 (1999), 3671--3676.
\bibitem{X2}Y. Xue, {\it The general stable rank in non--stable $K$--Theory},
Rocky Mount. J. Math., 30 (2000), 761--775.
\bibitem{X3}Y. Xue, {\it The $K$--groups of $C(M)\times_\theta\mathbb Z_p$ for certain pairs
$(M,\theta)$}, J. Operator Theory, 46 (2001), 337--354.
\bibitem{X4}Y. Xue, {\it Classification of $C(M)\times_\theta\mathbb Z_p$ for certain pairs
$(M,\theta)$}, Filomat, ({\bf to appear}).
\end{thebibliography}
\end{document}